\newtheorem{lemma}[equation]{Lemma}
\newtheorem{theorem}[equation]{Theorem}
\newtheorem{conjecture}{Conjecture}
\newtheorem{question}{Question}
\newtheorem{problem}{Problem}
\newtheorem{definition}[equation]{Definition}
\newtheorem{cor}[equation]{Corollary}
\theoremstyle{remark}
\newtheorem{example}[equation]{Example}
\title{Strongly Real Beauville Groups}
\date{}
\keywords{Beauville structure, Beauville group, Beauville
surface} 
\author{Ben Fairbairn}
\address{Ben Fairbairn, Department of Economics, Mathematics and Statistics, Birkbeck, University of London, Malet Street, London WC1E 7HX, United Kingdom}
\email{b.fairbairn@bbk.ac.uk}
\begin{document}
\maketitle

\begin{abstract}
A strongly real Beauville group is a Beauville group that defines a
real Beauville surface. Here we discuss efforts to find examples of
these groups, emphasising on the one extreme finite simple groups
and on the other abelian and nilpotent groups. We will also discuss
the case of characteristically simple groups and almost simple groups. \emph{En route} we shall
discuss several questions, open problems and conjectures as well as giving several new examples of infinite families of strongly real Beauville groups.
\end{abstract}

\section{Introduction}

We first issue an apology/assurance. It is the nature of Beauville
constructions that this article is likely to be of interest to both
geometers and group theorists. The author is painfully aware of
this. As a consequence there will be times when we make statements
that may seem obvious or elementary to the group theorist but may
seem quite surprising to the geometer.

We begin with the usual definitions to establish notation and
terminology.

\begin{definition}
A surface $\mathcal{S}$ is a \textbf{Beauville surface of unmixed
type} if
\begin{itemize}
\item the surface $\mathcal{S}$ is isogenous to a higher product, that is,
$\mathcal{S}\cong(\mathcal{C}_1\times\mathcal{C}_2)/G$ where
$\mathcal{C}_1$ and $\mathcal{C}_2$ are algebraic curves of genus at
least $2$ and $G$ is a finite group acting faithfully on
$\mathcal{C}_1$ and $\mathcal{C}_2$ by holomorphic transformations
in such a way that it acts freely on the product
$\mathcal{C}_1\times\mathcal{C}_2$ and
\item each $\mathcal{C}_i/G$ is isomorphic to the projective line $\mathbb{P}_1(\mathbb{C})$, and the covering map $\mathcal{C}_i\rightarrow\mathcal{C}_i/G$ is
ramified over three points.
\end{itemize}
\end{definition}

What makes these surfaces so easy to work with is the fact that the
definition above can be translated into purely group theoretic terms --- the following definition imposes equivalent conditions on the group $G$.

\begin{definition}\label{DagDef}
Let $G$ be a finite group. Let $x,y\in G$ and let
$$\Sigma(x,y):=\bigcup_{i=1}^{|G|}\bigcup_{g\in G}\{(x^i)^g,(y^i)^g,((xy)^i)^g\}.$$

An \textbf{unmixed Beauville structure} for $G$ is a pair of generating
sets of elements $\{\{x_1,y_1\},\{x_2,y_2\}\}\subset G\times G$ such that
$\langle x_1,y_1\rangle=\langle x_2,y_2\rangle=G$ and
\begin{equation}
\Sigma(x_1,y_1)\cap\Sigma(x_2,y_2)=\{e\}\tag{$\dagger$}
\end{equation}
where $e$ denotes the identity element of $G$. If $G$ has
a Beauville structure, then we say that $G$ is a \textbf{Beauville group}.
Furthermore we say that the structure has \textbf{type}
$((o(x_1),o(y_1),o(x_1y_1)),(o(x_2),o(y_2),o(x_2y_2))).$
\end{definition}

In the author's experience, upon seeing the above definition, group
theorists often retort ``why record just the orders of the elements
and not precisely which classes the elements belong to?" Determining
precisely which class an element belongs to is much harder than determining its order. Furthermore, in practice, when ensuring
that a set of elements satisfies condition ($\dagger$) the easiest
way often is to show that $o(x_1)o(y_1)o(x_1y_1)$ is coprime to
$o(x_2)o(y_2)o(x_2y_2)$. This simple observation has been used to
great effect by several authors --- see
\cite{FairbairnExceptional,FairbairnMagaardParker,FuertesJones11,GuralnickMalle12}
among others. Furthermore, the type alone encodes substantial
amounts of geometric information: the Riemann-Hurwitz formula
\[
g(\mathcal{C}_i)=1+\frac{|G|}{2}\bigg(1-\frac{1}{o(x_i)}-\frac{1}{o(y_i)}-\frac{1}{o(x_iy_i)}\bigg)
\]
tells us the genus of each of the curves used to define the surface
$\mathcal{S}$. Indeed, whilst some groups have generating pairs that by the above formula define surfaces with the property that $g(\mathcal{C})\leq1$, condition $(\dagger)$ ensures that for each $i$ we have that $g(\mathcal{C}_i)\geq2$. Furthermore, a theorem of Zeuthen-Segre also gives
us the Euler number of the surface $\mathcal{S}$ since
\[
e(\mathcal{S})=4\frac{(g(\mathcal{C}_1)-1)(g(\mathcal{C}_2)-1)}{|G|},
\]
which in turn gives us the holomorphic Euler-Poincar\'{e}
characteristic of $\mathcal{S}$ from the relation
$4\chi(\mathcal{S})=e(\mathcal{S})$ --- see \cite[Theorem
3.4]{Catanese00}.

In light of the above, we make the following non-standard definition
which will be of use in what follows.

\begin{definition}
We say that a Beauville structure $\{\{x_1,y_1\},\{x_2,y_2\}\}$ is
\textbf{coprime} if $o(x_1)o(y_1)o(x_1y_1)$ and
$o(x_2)o(y_2)o(x_2y_2)$ are coprime.
\end{definition}

Given any complex surface $\mathcal{S}$ it is natural to consider
the complex conjugate surface $\overline{\mathcal{S}}$. In
particular, it is natural to ask whether the surfaces are biholomorphic.

\begin{definition}
Let $\mathcal{S}$ be a complex surface. We say that $\mathcal{S}$ is
\textbf{real} if there exists a biholomorphism
$\sigma:\mathcal{S}\rightarrow\overline{\mathcal{S}}$ such that
$\sigma^2$ is the identity map.
\end{definition}

As is often the case with Beauville surfaces, the above geometric
condition can be translated into purely group theoretic terms.

\begin{definition}
Let $G$ be a Beauville group. We say that $G$ is
\textbf{strongly real} if there exists a Beauville structure
$X=\{\{x_1,y_1\},\{x_2,y_2\}\}$ such that there exists an automorphism $\phi\in
\mbox{Aut}(G)$ and elements $g_i\in G$ for $i=1,2$ such
that
$$g_i\phi(x_i)g_i^{-1}=x_i^{-1}\mbox{ and }g_i\phi(y_i)g_i^{-1}=y_i^{-1}$$
for $i=1,2$. In this case we also say that the Beauville structure $X$ is a strongly real Beauville structure.
\end{definition}

In practice we can always replace one generating pair by some
conjugate of it and so we can take $g_1=g_2=e$ and often this is
what is done in practice.

In \cite{BauerCataneseGrunewald05} Bauer, Catanese and Grunewald
show that a Beauville surface is real if, and only if, the
corresponding Beauville group and structure are strongly real.

\begin{example}
In \cite{Catanese03} Catanese classified the abelian Beauville
groups by proving the following.

\begin{theorem}\label{ab}
If $G$ is an abelain group, then $G$ is a Beauville group if, and only
if, $G\cong\mathbb{Z}_n\times\mathbb{Z}_n$ where gcd($n$,6)=1 and
$\mathbb{Z}_n$ denotes the cyclic group of order $n>1$.
\end{theorem}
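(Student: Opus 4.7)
The proof splits into sufficiency and necessity; the bulk is the necessity, which reduces through the primary decomposition to an order-counting argument inside each abelian $p$-group. For sufficiency, when $\gcd(n,6)=1$ I would exhibit an explicit Beauville structure on $G=\mathbb{Z}_n\times\mathbb{Z}_n$: a natural candidate (in additive notation) is the standard basis $\{(1,0),(0,1)\}$ together with $\{(1,2),(1,4)\}$, each pair having unit-determinant matrix modulo $n$. All six elements $x_i,y_i,x_iy_i$ have additive order $n$, and since $|G|=n^2$, any two distinct cyclic subgroups of order $n$ intersect trivially. The hypothesis $\gcd(n,6)=1$ guarantees the six subgroups are pairwise distinct --- for example, the sum $(2,6)=(1,2)+(1,4)$ is not a unit scalar multiple of $(1,0)$, $(0,1)$, or $(1,1)$ precisely because $2$ and $3$ are coprime to $n$.

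\textbf{Necessity: non-cyclic and $p$-Sylow reductions.} In an abelian group $\Sigma(x,y)=\langle x\rangle\cup\langle y\rangle\cup\langle xy\rangle$, so $(\dagger)$ just says the three cyclic subgroups from pair~$1$ meet the three from pair~$2$ only in the identity. Any Beauville group is $2$-generated, so $G\cong\mathbb{Z}_m\times\mathbb{Z}_n$ with $m\mid n$. Cyclic $G$ is impossible: in $\mathbb{Z}_n$ one has $\langle a\rangle\cap\langle b\rangle=\{e\}$ iff $\gcd(o(a),o(b))=1$, and combining $\langle x_1\rangle\cap\langle x_2\rangle=\langle x_1\rangle\cap\langle y_2\rangle=\{e\}$ with $\mathrm{lcm}(o(x_2),o(y_2))=n$ (forced by $\{x_2,y_2\}$ generating a cyclic group) gives $\gcd(o(x_1),n)=1$, hence $x_1=e$, a contradiction. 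Next, the primary decomposition $G=\bigoplus_p G_p$ is direct, and the projections of a Beauville structure give a Beauville structure on each $G_p$: generation descends to direct factors, and $\Sigma(x_p,y_p)=\Sigma(x,y)\cap G_p$. So each $G_p$ is abelian Beauville and, in particular, not cyclic, whence $G_p\cong\mathbb{Z}_{p^a}\times\mathbb{Z}_{p^b}$ with $1\le a\le b$.

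\textbf{Forcing $a=b$.} Use the identity $o(x)\,o(y)=|G_p|\cdot|\langle x\rangle\cap\langle y\rangle|$ valid in any abelian group. Applied to generating pairs it gives $o(x_i)\,o(y_i)\ge p^{a+b}$, and $(\dagger)$ gives $o(u)\,o(v)\le p^{a+b}$ whenever $u,v$ come from different triples. Combining forces the equalities $o(x_1)o(y_1)=o(x_2)o(y_2)=o(x_1)o(x_2)=o(y_1)o(y_2)=p^{a+b}$, hence $o(x_1)=o(y_2)$ and $o(y_1)=o(x_2)$. The $(\dagger)$ bounds $o(x_1y_1)\,o(x_2)\le p^{a+b}$ and $o(x_1y_1)\,o(y_2)\le p^{a+b}$ then force $o(x_1y_1)\le\min(o(x_1),o(y_1))$. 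But in a finite abelian $p$-group the sum of elements of orders $p^s<p^t$ itself has order $p^t$, so $o(x_1)\ne o(y_1)$ would give $o(x_1y_1)=\max(o(x_1),o(y_1))$, contradicting the previous inequality. Hence $o(x_1)=o(y_1)=p^{(a+b)/2}$, and similarly for pair~$2$. For $(x_1,y_1)$ to generate $G_p$, which has exponent $p^b$, one needs $\max(o(x_1),o(y_1),o(x_1y_1))=p^b$, i.e., $(a+b)/2\ge b$, i.e., $a\ge b$, and so $a=b$ for every $p$, giving $m=n$.

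\textbf{Ruling out $p=2,3$.} Finally, with $G_p\cong\mathbb{Z}_{p^a}\times\mathbb{Z}_{p^a}$, examine $G_p[p]\cong\mathbb{F}_p^2$, which has exactly $p+1$ subgroups of order $p$. The map $x\mapsto p^{a-1}x$ identifies $\langle x\rangle\cap G_p[p]$ with the line $\langle\bar x\rangle\subset G_p/pG_p\cong\mathbb{F}_p^2$, so the three cyclic subgroups $\langle x_i\rangle,\langle y_i\rangle,\langle x_iy_i\rangle$ produce the three distinct lines $\langle\bar x_i\rangle,\langle\bar y_i\rangle,\langle\bar x_i+\bar y_i\rangle$ of $\mathbb{F}_p^2$. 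Condition $(\dagger)$ forces the six lines coming from the two pairs to be all distinct, so $p+1\ge 6$, whence $p\ge 5$. Therefore $\gcd(n,6)=1$, completing the proof. I expect the main technical obstacle to be the order arithmetic in the \emph{Forcing $a=b$} step: chaining the order-counting inequalities correctly and invoking the structural fact about orders of sums in $p$-groups at the right moment.
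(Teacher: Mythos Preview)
The paper does not actually prove this theorem: it is stated with attribution to Catanese \cite{Catanese03} and used as a black box (the only argument supplied in this paper is the one-line proof of Corollary~\ref{abCor}). So there is no ``paper's own proof'' to benchmark your attempt against; you have supplied a self-contained argument where the paper relies on a citation.

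Your necessity argument is sound. The reduction to Sylow factors, the chain of order inequalities forcing $a=b$, and the line-count in $G_p[p]\cong\mathbb{F}_p^2$ ruling out $p\in\{2,3\}$ all go through. Two small wording issues: the identity $o(x)o(y)=|G_p|\cdot|\langle x\rangle\cap\langle y\rangle|$ holds only when $\langle x,y\rangle=G_p$ (which is how you use it), not ``in any abelian group''; and in the exponent step it is really $\max(o(x_1),o(y_1))=p^b$ that you need, since the exponent of $\langle x,y\rangle$ in an abelian group is $\mathrm{lcm}(o(x),o(y))$.

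There is, however, a genuine slip in the sufficiency part. The assertion ``since $|G|=n^2$, any two distinct cyclic subgroups of order $n$ intersect trivially'' is false when $n$ is not squarefree: for instance in $\mathbb{Z}_{25}\times\mathbb{Z}_{25}$ the cyclic subgroups $\langle(1,0)\rangle$ and $\langle(1,5)\rangle$ meet in a subgroup of order $5$. Your concrete choice $\{(1,0),(0,1)\}$, $\{(1,2),(1,4)\}$ does work, but the correct reason is that for each of the nine relevant pairs the $2\times 2$ determinant (namely $1,2,3,4,6$ up to sign) is a unit modulo $n$, and \emph{that} forces trivial intersection. Replace the false general claim by this determinant check and the sufficiency is complete.
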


This theorem immediately gives us the following.

\begin{cor}\label{abCor}
Every abelian Beauville group is a strongly real Beauville group making any Beauville structure for these groups strongly real.
\end{cor}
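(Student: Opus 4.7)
The plan is to observe that in any abelian group $G$, the inversion map $\phi \colon G \to G$ defined by $\phi(g) = g^{-1}$ is itself a group automorphism. This is because $\phi(gh) = (gh)^{-1} = h^{-1}g^{-1} = g^{-1}h^{-1} = \phi(g)\phi(h)$, where commutativity is used in the middle equality; and $\phi$ is clearly a bijection since it is its own inverse. So $\phi \in \mathrm{Aut}(G)$.

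Given any Beauville structure $X = \{\{x_1,y_1\},\{x_2,y_2\}\}$ on an abelian Beauville group $G$ (which by Theorem \ref{ab} has the form $\mathbb{Z}_n \times \mathbb{Z}_n$ with $\gcd(n,6)=1$, though the abelian hypothesis alone suffices here), I would simply choose $\phi$ to be the inversion automorphism and take $g_1 = g_2 = e$. Then for $i=1,2$,
\[
g_i \phi(x_i) g_i^{-1} = \phi(x_i) = x_i^{-1}, \qquad g_i \phi(y_i) g_i^{-1} = \phi(y_i) = y_i^{-1},
\]
which is precisely the strongly real condition.

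Since the choice of structure $X$ was arbitrary, this argument proves the stronger statement that \emph{every} Beauville structure on an abelian Beauville group is strongly real, justifying the second clause of the corollary as well. There is essentially no obstacle here: the only thing being used is that inversion is an automorphism in the abelian setting, and this immediately supplies the required conjugating automorphism with trivial auxiliary conjugators $g_i$.
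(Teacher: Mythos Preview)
Your proof is correct and is essentially identical to the paper's own argument: the paper simply notes that in an abelian group the map $x\mapsto -x$ (i.e., inversion) is an automorphism, which is exactly what you use. Your write-up is more explicit about verifying the strongly real condition with $g_1=g_2=e$, but the underlying idea is the same.
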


\begin{proof}
If $H$ is an abelian group, then the map $H\rightarrow H$,
$x\mapsto -x$ is an automorphism.
\end{proof}
\end{example}

More recent (and group theoretic) motivation comes from the
following. The absolute Galois group
Gal$(\overline{\mathbb{Q}}/\mathbb{Q})$ is very poorly understood.
Indeed, The Inverse Galois Problem --- arguably the hardest open
problem in algebra today --- forms just one small part of efforts to
understand Gal$(\overline{\mathbb{Q}}/\mathbb{Q})$ (it amounts to
showing that every finite group arises as the quotient of Gal$(\overline{\mathbb{Q}}/\mathbb{Q})$ by a topologically closed
normal subgroup). When confronted with the task of understanding a
group it is natural to consider an action of the group on some set.
The group Gal$(\overline{\mathbb{Q}}/\mathbb{Q})$ acts on the set of
Beauville surfaces thanks to Grothendieck's theory of Dessins
d'enfants (``children's drawings"). See \cite[Section
11]{JonesSurvey} for a more detailed discussion of this and related
matters.

Henceforth we shall use the standard {\sc Atlas} notation for group
theoretic concepts (aside from occasional deviations to minimise confusion with geometric concepts) as described in some detail in the introductory
sections of \cite{ATLAS}. In particular, given two groups $A$ and $B$ we use the following notation.
\begin{itemize}
\item We write $A\times B$ for the direct product of $A$ and $B$, that is, the group whose members are ordered pairs $(a,b)$ with $a\in A$ and $b\in B$ such that for $(a,b),(a',b')\in A\times B$ we have the multiplication $(a,b)(a',b')=(aa',bb')$. Given a positive integer $k$ we write $A^k$ for the direct product of $k$ copies of $A$.
\item We write $A.B$ for the extension of $A$ by $B$, that is, a group with a normal subgroup isomorphic to $A$ whose quotient is $B$ (such groups are not necessarily direct products - for instance SL$_2(5)$=2.PSL$_2(5)$). 
\item We write $A:B$ for a semi-direct product of $A$ and $B$, also known as a split extension $A$ and $B$, that is, there is a homomorphism $\phi\colon B\rightarrow Aut(A)$ with elements of this group being ordered pairs $(b,a)$ with $a\in A$ and $b\in B$ such that for $(b,a),(b',a')\in A:B$ we have the multiplication $(b,a)(b,a)=(bb',a^{\phi(b')}a')$.
    \item We write $A\wr B$ for the wreath product of $A$ and $B$, that is, if $B$ is a permutation group on $n$ points then we have the split extension $A^n:B$ with $B$ acting in a way that permutes the $n$ copies of $A$.
\end{itemize}

In several places we shall refer to `straightforward computations'
or calculations that readers can easily reproduce for themselves.
On these occasions either of Magma \cite{Magma} or GAP \cite{GAP}
can easily be used to do this.

In Section 2 we will discuss the finite simple groups and in particular a conjecture of Bauer, Catanese and Grunewald concerning which of these groups are strongly real Beauville groups. In Sections 3 our attention turns to the characteristically simple groups and in particular the recent work of Jones which we push further in the cases of the symmetric and alternating groups in Section 4. We go on in Section 5 to discuss which of the almost simple groups are strongly real Beauville groups. Finally, in Section 6 we briefly discuss nilpotent groups and $p$-groups.

\section{The Finite Simple Groups}\label{FSG}

Naturally, a necessary condition for being a strongly real Beauville
group is being a Beauville group. Furthermore, a necessary condition
for being a Beauville group is being 2-generated: we say that a group $G$ is 2-generated if there exist two elements $x,y\in G$ such that $\langle x,y\rangle=G$. It is an
easy exercise for the reader to show that the alternating groups
$A_n$ for $n\geq3$ are 2-generated. In \cite{Steinberg} Steinberg
proved that the simple groups of Lie type are 2-generated and in \cite{AG}
Aschbacher and Guralnick showed that the sporadic simple groups are
2-generated. We thus have that all of the non-abelian finite simple
groups are 2-generated making them natural candidates for Beauville
groups. This lead  Bauer, Catanese and Grunewald to conjecture that
aside from $A_5$, which is easily seen to not be a Beauville group,
every non-abelian finite simple group is a Beauville group - see
\cite[Conjecture 1]{BauerCataneseGrunewald05} and \cite[Conjecture
7.17]{BauerCataneseGrunewald06}. This suspicion was later proved
correct
\cite{FairbairnMagaardParker,FairbairnMagaardParker2,GarionLarsenLubotzky12,GuralnickMalle12},
indeed the full theorem proved by the author, Magaard and Parker in
\cite{FairbairnMagaardParker} is actually a more general statement
about quasisimple groups (recall that a group $G$ is quasisimple if it is generated by its commutators and the quotient by its center $G/Z(G)$ is a simple group.).

Having found that almost all of the non-abelian finite simple groups
are Beauville groups, it is natural to ask which of the non-abelian
finite simple groups are strongly real Beauville groups. In
\cite[Section 5.4]{BauerCataneseGrunewald05} Bauer, Catanese and
Grunewald wrote\\

\begin{quotation}
``There are 18 finite simple nonabelian groups of order $\leq15000$.
By computer calculations we have found strongly [real] Beauville
structures on all of them with the exceptions of $A_5$, PSL$_2$(7),
$A_6$, $A_7$, PSL$_3$(3), U$_3$(3) and the Mathieu group M$_{11}$."
\end{quotation}\hspace{10mm}

On the basis of these computations they conjectured that all but
finitely many non-abelian finite simple groups are strongly real
Beauville groups. Several authors have worked on this and many
special cases are now known to be true.
\begin{itemize}
\item In \cite{FuertesGD10} Fuertes and Gonz\'{a}lez-Diez showed
that the alternating groups $A_n$ ($n\geq7$) and the symmetric
groups $S_n$ ($n\geq5$) are strongly real Beauville groups by
explicitly writing down permutations for their generators and the
automorphisms and applying some of the classical theory of
permutation groups to show that their elements had the properties
they claimed. Subsequently the alternating group A$_6$ was also shown to be a strongly real Beauville group.

\item In \cite{FuertesJones11} Fuertes and Jones prove that the
simple groups $PSL_2(q)$ for prime powers $q>5$ and the quasisimple
groups $SL_2(q)$ for prime powers $q>5$ are strongly real Beauville
groups.  As with the alternating and symmetric groups, these results
are proved by writing down explicit generators, this time combined
with a celebrated theorem usually (but historically inaccurately)
attributed to Dickson for the maximal subgroups of $PSL_2(q)$.
General lemmas for lifting Beauville structures from a group to its covering
groups are also used.

\item Settling the case of the sporadic simple groups makes no
impact on the above conjecture, there being only 26 of them.
Nonetheless, for reasons we shall return to below, in
\cite{FairbairnExceptional} the author determined which of the
sporadic simple groups are strongly real Beauville groups, including
the `27$^{th}$ sporadic simple group', the Tits group $^2\mbox{F}_4(2)'$.
Of all the sporadic simple groups only the Mathieu groups M$_{11}$ and M$_{23}$ are not strongly real.
For all of the other sporadic groups smaller than the Baby Monster
group $\mathbb{B}$ explicit words in the `standard generators'
\cite{Wilson} for a strongly real Beauville structure are given. (For those unfamiliar with standard generators, we will describe these in Section \ref{AS}.) For
the Baby Monster group $\mathbb{B}$ and Monster group $\mathbb{M}$
character theoretic methods are used.
\end{itemize}

As we can see from the above bullet points, several of the groups
that Bauer, Catanese and Grunewald could not find strongly real
Beauville structures for do indeed have strongly real Beauville
structures. In particular, we note that the group PSL$_2(9)\cong$ A$_6$
is in fact strongly real.

Using the results mentioned above, combined with unpublished calculations, the author has pushed
Bauer, Catanese and Grunewald's original computations to every
non-abelian finite simple group of order at most $100\,000\,000$
and, as we noted above, several much larger ones in
\cite{FairbairnExceptional}. Many of the smaller groups seemed to
require the use of outer automorphisms to make their Beauville
structures strongly real, which explains much of the above
difficulty in finding strongly real Beauville structures in certain groups. Slightly larger groups had enough conjugacy classes for
inner automorphisms to be used instead. Consequently, it seems that
`small' non-abelian finite simple groups fail to be strongly real if
they have too few conjugacy classes (as is the case with $A_5$ and
as we would intuitively expect) or if they have no outer automorphisms
--- a phenomenon that is extremely rare. We are thus lead to the following
somewhat stronger conjecture.

\begin{conjecture}\label{simpconj}
All non-abelian finite simple groups apart from $A_5$, M$_{11}$ and
M$_{23}$ are strongly real Beauville groups.
\end{conjecture}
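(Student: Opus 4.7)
The plan is to proceed via the classification of finite simple groups, reducing to the cases not already covered by the results cited in this section. Those results settle the conjecture for the alternating groups $A_n$ with $n \geq 6$, for $\mathrm{PSL}_2(q)$ with $q > 5$, and for all sporadic simple groups bar $M_{11}$ and $M_{23}$ (including the Tits group). What remains are the higher-rank classical groups of Lie type --- $\mathrm{PSL}_n(q)$ with $n \geq 3$, $\mathrm{PSU}_n(q)$ with $n \geq 3$, $\mathrm{PSp}_{2n}(q)$, and $\mathrm{P}\Omega^{\varepsilon}_n(q)$ --- together with the exceptional groups of Lie type $G_2(q), F_4(q), E_6(q), E_7(q), E_8(q), {}^2B_2(q), {}^2G_2(q), {}^3D_4(q), {}^2F_4(q)$ and ${}^2E_6(q)$. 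For each such family, I would aim to produce, for every $G$ in the family, two generating pairs $(x_1, y_1)$ and $(x_2, y_2)$ whose type vectors $(o(x_i), o(y_i), o(x_i y_i))$ have coprime products, so that condition $(\dagger)$ holds automatically, and such that each of $x_i$ and $y_i$ is inverted, up to conjugation, by a common automorphism of $G$.

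The inversion step should come from the outer automorphism group. For $\mathrm{PSL}_n(q)$ the inverse-transpose graph automorphism sends each semisimple element to a conjugate of its inverse, and analogous graph or graph-field automorphisms handle $\mathrm{PSU}_n(q)$, $\mathrm{P}\Omega^+_{2n}(q)$, $E_6(q)$, ${}^3D_4(q)$ and $F_4(q)$ in characteristic $2$. For the remaining families one can rely on the fact that in types where the longest Weyl element $w_0$ acts as $-1$ on the root system --- namely $B_n$, $C_n$, $D_{2n}$, $G_2$, $F_4$, $E_7$ and $E_8$ --- every regular semisimple element is already real inside $G$ itself. The two generating pairs themselves can either be written down explicitly in terms of Chevalley generators, in the spirit of Fuertes--Jones for $\mathrm{PSL}_2(q)$, or produced probabilistically by combining the Liebeck--Shalev estimates on $(2,3,r)$-generation with Frobenius-style character-theoretic counts of triples in prescribed real conjugacy classes. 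Coprimality of the type vectors can be arranged by drawing one pair from elements whose orders divide $q-1$ (or some fixed cyclotomic factor $\Phi_d(q)$) and the other from elements whose orders come from Zsigmondy primes associated to a disjoint cyclotomic factor; this simultaneously forces generation via Aschbacher's classification of maximal subgroups, since such elements typically lie in no proper reducible, imprimitive or field-extension subgroup.

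The main obstacle will be the \emph{small cases}: low-rank groups over $\mathbb{F}_2, \mathbb{F}_3, \mathbb{F}_4, \mathbb{F}_5$ may have too few conjugacy classes of coprime order to admit two disjoint type vectors, and their outer automorphism groups can be too small to supply a useful inverting automorphism. This is precisely the phenomenon that accounts for the three known exceptions $A_5$, $M_{11}$ and $M_{23}$, and it is entirely conceivable that further exceptions lurk among groups such as $\mathrm{PSL}_3(q)$, $\mathrm{PSU}_3(q)$, $\mathrm{PSp}_4(q)$, or the small Suzuki and Ree groups ${}^2B_2(q)$ and ${}^2G_2(q)$, for $q$ close to the defining characteristic. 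Verifying these small cases, or uncovering additional exceptions, will unavoidably require direct computation group-by-group, and is the genuinely delicate part of any proof of the conjecture as stated.
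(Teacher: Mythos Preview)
The statement you are attempting to prove is presented in the paper as an open \emph{conjecture}, not a theorem: there is no proof in the paper to compare against. The paper's contribution toward Conjecture~\ref{simpconj} consists only of (i) citing prior results for $A_n$ ($n\geq6$), $\mathrm{PSL}_2(q)$ ($q>5$), and the sporadic groups; (ii) a new verification for the Suzuki groups $^2B_2(2^{2n+1})$ via the explicit matrix construction of Theorem~\ref{Suz}; and (iii) unpublished computational checks for all simple groups of order at most $10^8$. Your proposal is therefore not a proof but a programme for attacking an open problem, and several of its steps (``I would aim to produce\ldots'', ``can either be written down explicitly\ldots or produced probabilistically'') are aspirational rather than executed.

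Beyond this, there is a substantive technical gap in your inversion step. The strongly real condition requires a \emph{single} automorphism $\phi$ and, for each $i$, a \emph{single} element $g_i$ with $g_i\phi(x_i)g_i^{-1}=x_i^{-1}$ and $g_i\phi(y_i)g_i^{-1}=y_i^{-1}$ simultaneously. Knowing that the inverse-transpose automorphism of $\mathrm{PSL}_n(q)$ sends each semisimple element to a conjugate of its inverse, or that $w_0=-1$ forces individual regular semisimple elements to be real, does not by itself give simultaneous inversion of a generating \emph{pair} by the same conjugating element. In the paper's explicit constructions (for $^2B_2(q)$ here, and in the cited work of Fuertes--Jones for $\mathrm{PSL}_2(q)$) this is arranged by building the generators as products of carefully chosen involutions, so that a common involution inverts both; your outline does not indicate how the analogous control would be achieved for higher-rank classical or exceptional groups. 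This, together with the small-field cases you yourself flag, is exactly why the statement remains a conjecture.
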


To add further weight to this conjecture we verify this conjecture
for the Suzuki groups $^2B_2(2^{2n+1})$. Let $q=2^{2n+1}$.

\begin{theorem}\label{Suz}
Each of the groups $^2B_2(q)$ has a strongly real Beauville
structure of type $((q-1,q-1,q-1),(d_1,d_2,2))$ where $d_1$ and
$d_2$ are odd and coprime to $q-1$.
\end{theorem}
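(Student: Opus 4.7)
The natural choice is to set $d_1 := q+\sqrt{2q}+1$ and $d_2 := q-\sqrt{2q}+1$. Since $q = 2^{2n+1}$, the integer $q-1$ is odd, $\sqrt{2q}=2^{n+1}$ is even so $d_1,d_2$ are odd, and the identity $d_1 d_2 = q^2+1$ combined with $\gcd(q^2+1, q-1) = \gcd(2, q-1)=1$ yields $\gcd((q-1)^3,\, 2 d_1 d_2)=1$. Hence any pair of generating triples of the stated types automatically satisfies condition $(\dagger)$, so it remains to exhibit the generating pairs and the required inversions.

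Next I would show that every element of $Sz(q)$ is real. The three non-trivial maximal tori of $Sz(q)$ are cyclic of orders $q-1$, $q+\sqrt{2q}+1$ and $q-\sqrt{2q}+1$, with normalisers $D_{2(q-1)}$, $C_{q+\sqrt{2q}+1}:4$ and $C_{q-\sqrt{2q}+1}:4$ respectively. The non-trivial coset of $D_{2(q-1)}/C_{q-1}$ inverts the $(q-1)$-torus by definition, while for the twisted tori the quotient $C_4$ acts via the Frobenius $x \mapsto x^q$, and since $q^2 \equiv -1 \pmod{q\pm\sqrt{2q}+1}$, the unique involution of this quotient inverts the torus. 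Involutions are trivially self-inverse, so every element of $Sz(q)$ is conjugate to its inverse and we may take $\phi$ in the strongly real definition to be the identity.

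I would then produce the two generating triples via a structure-constants argument using Suzuki's explicit character table. For the pair $(x_2,y_2)$ of type $(d_1,d_2,2)$ one evaluates
\[
\frac{|C_1|\,|C_2|}{|G|}\sum_{\chi\in\mathrm{Irr}(G)}\frac{\chi(x_2)\chi(y_2)\overline{\chi(\iota)}}{\chi(1)},
\]
where $C_1,C_2$ are the conjugacy classes of elements of orders $d_1$ and $d_2$ and $\iota$ is the unique class of involutions, and checks positivity; an analogous computation handles the triple $(x_1,y_1,x_1 y_1)$ of type $(q-1,q-1,q-1)$. Generation then follows from the list of maximal subgroups. For the second triple, the subgroup $\langle x_2,y_2\rangle$ must contain elements of two coprime orders $d_1$ and $d_2$, which rules out every Borel, dihedral and $C:4$ normaliser; any subfield subgroup $Sz(q_0)$ with $q_0<q$ has torus orders strictly less than $d_1$ and $d_2$ and so cannot contain such elements either. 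For the first triple, one avoids containment in any Borel or $D_{2(q-1)}$ by placing $x_1$ and $y_1$ in non-conjugate tori.

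The main obstacle is the strongly real condition itself: for each $i$ we must exhibit a single element $g_i \in G$ inverting both $x_i$ and $y_i$ by conjugation. Since $g_i^2$ then centralises $\langle x_i,y_i\rangle=G$ and $Z(Sz(q))=1$, the element $g_i$ must be an involution. I would resolve this by replacing $y_i$ with a conjugate of the form $y_i := x_i^{h_i}$ where $h_i \in C_G(u_i)$ and $u_i$ is the torus-inverting involution from the previous paragraph; then
\[
u_i y_i u_i^{-1} = u_i h_i x_i h_i^{-1} u_i^{-1} = h_i u_i x_i u_i^{-1} h_i^{-1} = h_i x_i^{-1} h_i^{-1} = y_i^{-1},
\]
so $g_i:=u_i$ inverts both generators simultaneously. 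The centraliser $C_G(u_i)$ has order $q^2$ (it is a Sylow $2$-subgroup of $Sz(q)$), providing a large orbit of candidate conjugates, and a final check via the maximal subgroup list ensures that among these candidates one obtains a pair $(x_i,x_i^{h_i})$ that genuinely generates $Sz(q)$ with product of the required order.
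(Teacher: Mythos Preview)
Your outline contains an internal inconsistency that breaks the second triple. You begin by fixing $d_1=q+\sqrt{2q}+1$ and $d_2=q-\sqrt{2q}+1$, so $d_1\neq d_2$ for every $q>2$, and your generation argument for $\langle x_2,y_2\rangle$ explicitly relies on these two \emph{distinct} coprime orders to eliminate the torus normalisers and subfield subgroups. But in the strongly real step you then set $y_2:=x_2^{h_2}$ with $h_2\in C_G(u_2)$; conjugation preserves order, so this forces $o(y_2)=o(x_2)=d_1$, contradicting $o(y_2)=d_2$. Once you make this replacement the structure-constants computation becomes irrelevant (you have thrown away the $y_2$ it produced), the coprimality and generation arguments no longer apply as written, and the ``final check'' you defer --- that some $h_2$ in a Sylow $2$-subgroup makes $x_2x_2^{h_2}$ an involution while $\langle x_2,x_2^{h_2}\rangle=Sz(q)$ --- is essentially the entire content of the theorem for that triple. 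A smaller issue afflicts the first triple: all maximal split tori in $Sz(q)$ are conjugate, so ``placing $x_1$ and $y_1$ in non-conjugate tori'' is not meaningful, and with $y_1=x_1^{h_1}$ you have no mechanism offered for ruling out the Borel case of Lemma~\ref{SuzGen}(a).

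The paper avoids these difficulties by working constructively in the natural $4$-dimensional representation over $\mathbb{F}_q$. It writes down three explicit involutions $t_1,t_2,t_3$ and takes $x_1=t_1t_2$, $y_1=t_1t_3$, so that conjugation by $t_1$ visibly inverts both; characteristic-polynomial computations pin down the orders as $q-1$, and generation follows from Lemma~\ref{SuzGen}(a) together with a direct check that no $1$-dimensional subspace is preserved. For the second triple it exhibits explicit matrices $x_2,y_2$ with $x_2y_2$ of order~$2$, both inverted by the same $t_1$, whose traces can be chosen to lie in no proper subfield so that Lemma~\ref{SuzGen}(b) forces $\langle x_2,y_2\rangle={}^2B_2(q)$. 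The paper does not assert that $d_1,d_2$ equal $q\pm\sqrt{2q}+1$, only that they divide these numbers; your more specific claim is not needed and is precisely what collides with the $y_i=x_i^{h_i}$ device.
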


Throughout the following we shall be using the natural
4-dimensional representation of $^2B_2(q)$ over the field of order
$q$ as described in some detail in \cite[Section 4.2]{WilsonBook}. To prove Thoerem \ref{Suz} we will use knowledge of the maximal subgroups of the
Suzuki groups. The following lemma was proved by Suzuki --- see
\cite[Theorem 4.1]{WilsonBook}. Here we write $E_q$ for the
elementary abelian group of order $q$. Furthermore, by `subfield subgroup' we mean either the subgroup $^2B_2(q_0)$ consisting of matrices whose entries come from a subfield of the field $\mathbb{F}_q$ of order $\mathbb{F}_{q_0}$ where $q_0>1$ divides $q$ or one of its conjugates, those appearing in Lemma \ref{SuzMax}(v) being precisely the maximal subfield subgroups.

\begin{lemma}\label{SuzMax}
If $n > 1$, then the maximal subgroups of $^2B_2(q)$ are (up to
conjugacy).
\begin{enumerate}
\item[(i)] $E_q.E_q:\mathbb{Z}_{q-1}$, the subgroup of lower triangular matrices
\item[(ii)] $D_{2(q-1)}$
\item[(iii)] $\mathbb{Z}_{q+\sqrt{2q}+1}:4$
\item[(iv)] $\mathbb{Z}_{q-\sqrt{2q}+1}:4$
\item[(v)] $^2B_2(q_0)$ where $q = q_0^r$, $r$ is prime and $q_0>2$.
\end{enumerate}
\end{lemma}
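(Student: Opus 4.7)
The plan is to follow Suzuki's classical approach, which exploits the doubly transitive action of $G = {}^2B_2(q)$ on the Suzuki ovoid $\Omega$ of $q^2+1$ points in $\mathrm{PG}(3,q)$. The arithmetic backbone of the argument is the factorization $|G|=q^2(q-1)(q+\sqrt{2q}+1)(q-\sqrt{2q}+1)$, in which the three factors $q-1$, $q+\sqrt{2q}+1$ and $q-\sqrt{2q}+1$ are pairwise coprime odd integers. This severely constrains the element orders of $G$, and hence the possible subgroups.

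The first step would be to verify that the subgroups listed in (i)--(v) really do occur in $G$ with the claimed structure. The Borel subgroup in (i) is the stabilizer of a point of $\Omega$ and has order $q^2(q-1)$; its maximality is immediate from the 2-transitivity of $G$ on $\Omega$. The dihedral subgroup in (ii) is the normalizer of the split cyclic torus of order $q-1$, while (iii) and (iv) are the normalizers of the two non-split cyclic tori of coprime orders $q\pm\sqrt{2q}+1$, each extended by a cyclic group of order $4$. The subfield subgroup (v) arises by restricting matrix entries to the subfield $\mathbb{F}_{q_0} \subset \mathbb{F}_q$. For the maximality of (ii)--(iv) I would show that each cyclic torus $T$ is self-centralizing in $G$, so that the listed group is $N_G(T)$, and that any overgroup must already be the whole of $G$; for (v), maximality follows from the primality of $r$ together with the standard subfield correspondence.

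The main obstacle is completeness --- proving that any maximal subgroup $M$ of $G$ is conjugate to one of those in the list. A case analysis on the prime divisors of $|M|$ is natural. If $q^2 \mid |M|$, then $M$ contains a Sylow $2$-subgroup $P$ of $G$, and standard Sylow-normalizer arguments force $M \leq N_G(P)$, which is case (i). Otherwise, by the pairwise coprimality of the three odd factors above, the odd part of $|M|$ must be divisible by exactly one of $q-1$, $q+\sqrt{2q}+1$ or $q-\sqrt{2q}+1$ non-trivially, and $M$ is shown to normalize the corresponding cyclic torus, placing $M$ in (ii), (iii) or (iv).

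The only remaining possibility is that $M$ acts doubly transitively on a proper sub-ovoid of $\Omega$ defined over a smaller field, in which case geometric recognition arguments identify $M$ as a subfield subgroup $^2B_2(q_0)$ for a suitable proper divisor $q_0$ of $q$, yielding case (v). The hard part will be precisely this step: showing that such an $M$ really does preserve a sub-ovoid structure of the required type, that the associated subfield has degree $q_0$ with $q = q_0^r$ and $r$ prime, and that nothing in between a subfield subgroup and $G$ can occur as a maximal subgroup. This is where the bulk of Suzuki's original technical work lies, and where any attempt to reprove the lemma from scratch would face the greatest difficulty.
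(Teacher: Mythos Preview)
The paper does not prove this lemma at all: it is stated as a known result and attributed to Suzuki, with a reference to \cite[Theorem 4.1]{WilsonBook}. Your proposal therefore goes well beyond what the paper does, since you are sketching the classical proof rather than merely citing it.

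As a sketch of Suzuki's argument your outline is broadly in the right spirit, but a few of the steps are softer than they appear. The claim that if $q^2\nmid|M|$ then the odd part of $|M|$ meets ``exactly one'' of the three coprime factors is not automatic: nothing a priori prevents $|M|$ from picking up prime divisors from two or more of $q-1$, $q+\sqrt{2q}+1$, $q-\sqrt{2q}+1$ simultaneously, and ruling this out is where Suzuki's detailed analysis of element orders and centralizers does real work. Likewise, passing from ``$|M|$ is divisible by a prime from one factor'' to ``$M$ normalizes the corresponding torus'' requires knowing that the relevant cyclic subgroups are self-centralizing and TI, which you mention for (ii)--(iv) but do not actually use in the completeness argument. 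Finally, your dichotomy leaves a gap: a maximal $M$ with $|M|$ even but $q^2\nmid|M|$ need not obviously fall into the torus-normalizer cases nor into the subfield case, and it is precisely handling such $M$ (via the structure of involution centralizers and the geometry of the ovoid) that occupies most of Suzuki's original paper. None of this is fatal to your plan, but the case analysis is less clean than your write-up suggests, and in any event the paper is content to quote the result.
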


From the above the following can easily be deduced.

\begin{lemma}\label{SuzGen}
\begin{enumerate}
\item[(a)] If $x,y\in$ $^2B_2(q)$ are two elements with the
property that $$o(x) = o(y) = o(xy) = q-1,$$ then
$\langle
x,y\rangle=\mathbb{Z}_{q-1}$, $E_q.E_q:\mathbb{Z}_{q-1}$ or
$^2B_2(q)$.
\item[(b)] If
$x,y\in$ $^2B_2(q)$ are two elements such that $o(x)$ and $o(y)$ are
have orders dividing $q\pm\sqrt{2q}+1$ and $o(xy) = 2$, then $\langle
x,y\rangle=$ $^2B_2(q)$ or a subfield subgroup.
\end{enumerate}
\end{lemma}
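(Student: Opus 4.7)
The plan is to invoke Lemma~\ref{SuzMax} and to show that if $\langle x,y\rangle$ is a proper subgroup of $^2B_2(q)$, then it is forced into one of the classes of subgroups appearing in the conclusion. For both parts I would first catalogue the element orders realised in each class of maximal subgroup, then eliminate those classes which cannot accommodate $x$, $y$ and $xy$ with the required orders.

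The key arithmetic input is that for $q=2^{2n+1}$, the integers $q-1$, $q+\sqrt{2q}+1$ and $q-\sqrt{2q}+1$ are all odd and pairwise coprime. Coprimality of $q-1$ with $q\pm\sqrt{2q}+1$ follows from the identity $(q+\sqrt{2q}+1)(q-\sqrt{2q}+1)=q^2+1$ together with $\gcd(q-1,q^2+1)=\gcd(q-1,2)=1$, and an analogous short computation handles the remaining pair. From the structure of each maximal subgroup one then sees: in (i) and (ii), element orders are either in $\{1,2,4\}$ or divide $q-1$; in (iii) and (iv), the subgroups are Frobenius with cyclic kernel and $\mathbb{Z}_4$ complement, so element orders are either in $\{1,2,4\}$ or divide $q+\sqrt{2q}+1$ (respectively $q-\sqrt{2q}+1$); and in any proper subfield subgroup $^2B_2(q_0)$ all element orders are bounded by $q_0+\sqrt{2q_0}+1<q-1$.

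For part~(a), elements of order $q-1$ cannot lie in subgroups of type (iii), (iv), or (v), so a proper $\langle x,y\rangle$ is contained in either (i) or (ii). In case (ii), the dihedral group $D_{2(q-1)}$ has a unique cyclic subgroup of order $q-1$, which must contain both $x$ and $y$ and hence also $xy$, forcing $\langle x,y\rangle=\mathbb{Z}_{q-1}$. In case (i) we conclude directly that $\langle x,y\rangle\leq E_q.E_q:\mathbb{Z}_{q-1}$.

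For part~(b), the coprimality of $q-1$ with $q\pm\sqrt{2q}+1$ rules out (i) and (ii): any nontrivial divisor of $q\pm\sqrt{2q}+1$ is odd and coprime to $q-1$, and so cannot appear as an element order there. In a subgroup of type (iii) or (iv), every odd-order element lies in the cyclic Frobenius kernel; so if $x$ and $y$ were both to lie in a single such copy they would lie in its abelian kernel, whence $xy$ would also lie in the kernel and so have odd order, contradicting $o(xy)=2$. If instead $o(x)$ and $o(y)$ divide the two different values $q\pm\sqrt{2q}+1$ then coprimality prevents them from jointly inhabiting any copy of (iii) or (iv). Hence a proper $\langle x,y\rangle$ must lie in a subfield subgroup of type (v). The main obstacle I anticipate is the careful bookkeeping of element orders inside the Frobenius maximal subgroups (iii) and (iv); once that and the coprimality claims are in hand, the argument is a clean case-by-case elimination.
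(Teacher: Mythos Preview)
Your proposal is correct and is precisely the approach the paper intends: the paper does not supply a proof of this lemma at all, merely asserting that ``from the above the following can easily be deduced'' immediately after stating Lemma~\ref{SuzMax}. Your case-by-case elimination via the maximal subgroup list, together with the coprimality of $q-1$, $q+\sqrt{2q}+1$ and $q-\sqrt{2q}+1$ and the Frobenius structure of the subgroups in (iii) and (iv), is exactly the deduction being left to the reader.

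One small remark: in part~(a) you conclude only that $\langle x,y\rangle\leq E_q.E_q:\mathbb{Z}_{q-1}$ in case~(i), rather than equality as the lemma literally asserts. This is not a defect in your argument but a slight looseness in the lemma's statement; containment is what is actually used in the proof of Theorem~\ref{Suz}, where the Borel case is ruled out by checking that no one-dimensional subspace is preserved.
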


\begin{proof}[Proof of Theorem \ref{Suz}]

For our first generating pair we consider the following elements of
$^2B_2(q)$ each of which are easily checked to have order 2 by
direct calculation.

\[
t_1:=\left(\begin{array}{cccc}
0&0&0&1\\
0&0&1&0\\
0&1&0&0\\
1&0&0&0
\end{array}\right)\hspace{20mm}
t_2:=\left(\begin{array}{cccc}
0&0&0&\beta^{-1}\\
0&0&\beta^{-2^{n+1}+1}&0\\
0&\beta^{2^{n+1}-1}&0&0\\
\beta&0&0&0
\end{array}\right)
\]

\[
t_3:=\left(\begin{array}{cccc}
1&0&0&0\\
0&1&0&0\\
\alpha^{2^{n+1}}&0&1&0\\
\alpha^2&\alpha^{2^{n+1}}&0&1
\end{array}\right)
\]
where $\alpha$ and $\beta$ are generators of the multiplicative
group $\mathbb{F}_q^{\times}$. The element $x_1=t_1t_2$ has order $q-1$. The characteristic
polynomial of $y_1=t_1t_3$ is
\[
p_1(\lambda)=\lambda^4+\alpha^2\lambda^3+\alpha^{2^{n+2}}\lambda^2+\alpha^2\lambda+1
\]
and if we set
$\gamma:=\beta+\beta^{-1}+\beta^{2^{n+1}-1}+\beta^{1-2^{n+1}}$ then
the characteristic polynomial of $t_1t_2$ is
\[
p_2(\lambda)=\lambda^4+\gamma\lambda^3+(\beta^{2^{n+1}}+\beta^{-2^{n+1}+2}+\beta^{2^{n+1}-2}+\beta^{-2^{n+1}})\lambda^2+\gamma\lambda+1
\]

Comparing $p_1$ with $p_2$ we see that the two polynomials are equal
if we have
\[
\gamma=\alpha^2\mbox{ and }
\]
\[
\beta^{2^{n+1}}+\beta^{-2^{n+1}+2}+\beta^{2^{n+1}-2}+\beta^{-2^{n+1}}=\alpha^{2^{n+2}}=(\alpha^2)^{2^{n+1}}=\gamma^{2^{n+1}}.
\]
Since $a\mapsto a^2$ is an automorphism of our underlying field we
see that the first of these equalities immediately implies the
second if
\[
(\beta+\beta^{-1}+\beta^{2^{n+1}-1}+\beta^{1-2^{n+1}})^{2^{n+1}}=\beta^{2^{n+1}}+\beta^{-2^{n+1}+2}+\beta^{2^{n+1}-2}+\beta^{-2^{n+1}}.
\]
Since
$\beta^{(2^{n+1}-1)2^{n+1}}=(\beta^{2^{2n+1}})^2\beta^{-2^{n+1}}=\beta^{2-2^{n+1}}$
we can choose $\alpha$ and $\beta$ to satisfy the above condition,
so in particular we have that $t_1t_2$ and $t_1t_3$ have the same
characteristic polynomial and thus both have order $q-1$.
Furthermore, these are both inverted by conjugation by $t_1$ since
$t_1$, $t_2$ and $t_3$ all have order 2. Similarly we find that
$t_1t_2t_1t_3$ has characteristic polynomial of the correct form to
have order $q-1$. From Lemma \ref{SuzGen}(a) we see that these
elements generate the group since $x_1$ and $y_1$ are not both contained
in a cyclic subgroup (one of them is diagonal) and by direct
calculation no one-dimensional subspace in the natural module is
preserved by them so there is no proper subgroup containing each of
these elements.

For the second triple we consider the matrices
\[
x_2:=\left(\begin{array}{cccc}
0&0&0&1\\
0&0&1&0\\
0&1&0&\delta^4\\
1&0&\delta^4&\delta^2
\end{array}\right)\hspace{20mm}
y_2:=\left(\begin{array}{cccc}
\epsilon^2&\epsilon^4&0&1\\
\epsilon^4&0&1&0\\
0&1&0&0\\
1&0&0&0
\end{array}\right)
\]
where $\delta,\epsilon\in\mathbb{F}_q$ are chosen so that
$\delta\not=\epsilon$ and these do not have the correct form for
these elements to have order $q-1$. Direct calculation shows that
these elements do not have orders 2 or 4 and that $o(x_2y_2)=2$.
These elements must, therefore, have orders that divide
$q\pm\sqrt{2q}+1$. Furthermore their traces are $\epsilon^2$ and $\delta^2$ which can be chosen to be in no proper subfield since $x\mapsto x^2$ is an automorphism of the field $\mathbb{F}_q$. These elements must therefore generate the group by
Lemma \ref{SuzGen}(b). Further direct calculation shows that
$x_2^{t_1}=x_2^{-1}$ and $y_2^{t_1}=y_2^{-1}$.
\end{proof}

\begin{comment}
\section{The Alternating Groups}

In this short section we discuss a result that will be useful in
what follows.

In \cite{FuertesGD10} Fuertes and Gonz\'{a}lez-Diez prove that the
alternating groups for $n\geq7$ (and the symmetric groups for
$n\geq5$) are strongly real Beauville groups (as noted earlier the
group A$_6$ was later found to also be strongly real).
Unfortunately, the Beauville structures they exhibit are not coprime
and for reasons that will become clear in the next section it is
desirable to have such structures (as well as the fact that it is
always desirable to have multiple proofs of any mathematical
result). Here we will construct strongly real coprime Beauville
structures for infinitely many alternating groups. To do this we
will need a technical result recently proven by Jones in
\cite[Section 6]{Jones1}.

\begin{lemma}\label{JonesLem}
Let $H$ be a transitive permutation group of degree $n$. If $H$ has
an element with cycle structure $c$, $d$ for coprime integers
$c,d>1$ then $H\geq A_n$.
\end{lemma}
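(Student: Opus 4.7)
My plan has two stages: first, show $H$ is primitive using the coprimality of $c$ and $d$; second, apply a classical theorem on primitive permutation groups to conclude $H\ge A_n$.

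For primitivity, I would argue by contradiction. Suppose $H$ preserves a nontrivial block system with blocks of common size $b$, $1<b<n$. Let $g\in H$ be the element with cycle type $(c,d)$, and let $A,B'$ be its two cycles, of sizes $c$ and $d$. If no block meets both $A$ and $B'$, then each block lies entirely in one of them, forcing $b\mid c$ and $b\mid d$, contradicting $\gcd(c,d)=1$. Otherwise, choose a block $B$ meeting both, and let $r$ be the length of the $\langle g\rangle$-orbit of $B$ among blocks. Both $g$-orbits $A,B'$ meet the $g$-invariant union $\bigcup_{i=0}^{r-1}g^iB$ and are therefore contained in it; moreover, the cyclic action of $g$ on the intersections $g^iB\cap A$, all of common size $s:=|B\cap A|$, yields $c=rs$, and symmetrically $d=rt$ with $t:=|B\cap B'|$. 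So $r\mid\gcd(c,d)=1$, forcing $r=1$ and hence $B=\Omega$, contradicting $b<n$. Thus $H$ is primitive.

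For the second stage, assume without loss of generality that $c\le d$; since $\gcd(c,d)=1$ with $c,d>1$ forbids $c=d$, in fact $c<n/2$. The element $g^d$ is then a pure $c$-cycle in $H$ that fixes the $n-c>n/2$ points of $B'$. By a classical theorem of Jordan--Marggraff type---a primitive permutation group of degree $n$ containing a nontrivial element that fixes more than $n/2$ of the points must contain $A_n$ (see, e.g., Wielandt's \emph{Finite Permutation Groups})---we conclude $H\ge A_n$.

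The main obstacle is the reliance on this classical Jordan--Marggraff theorem, whose proof is standard but nontrivial. Should one wish to avoid the citation, the easy subcases---when $c$ or $d$ equals $2$ (yielding a transposition in a primitive group, hence $H=S_n$), or when one of $c,d$ is a prime at least $3$ with the other at least $3$ (so Jordan's original $p$-cycle theorem gives $H\ge A_n$)---are handled directly. The residual case, where both $c$ and $d$ are composite, would demand either a CFSG-based classification of the primitive permutation groups of degree $n$ that could contain the prescribed cycle type, or a more elaborate classical argument extending Jordan--Marggraff methods to elements with richer cycle structure; this is where the genuine difficulty resides.
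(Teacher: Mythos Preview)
The paper does not prove this lemma: it simply attributes it to Jones \cite[Section~6]{Jones1}. Your two-stage outline --- first establish primitivity from the coprimality of $c$ and $d$, then invoke a theorem on cycles in primitive groups --- is essentially how Jones himself argues, so you have reconstructed the cited proof rather than supplied an alternative.

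Your primitivity argument in stage one is correct and cleanly executed. The only issue lies in stage two. The statement you invoke --- that a primitive group of degree $n$ containing a nontrivial element fixing more than $n/2$ points must contain $A_n$ --- is \emph{not} a classical result available in Wielandt; the pre-CFSG bounds on the minimal degree of a proper primitive group (Jordan, Bochert, Babai) are far weaker than $n/2$. What you actually need, and what is true, is Jones's CFSG-dependent theorem \cite{j} that a primitive group containing a cycle with at least three fixed points contains $A_n$ (the paper records this later as Lemma~\ref{AltGenLem}(a)). Since your element $g^{d}$ is a $c$-cycle fixing $d\geq 3$ points (from $c<d$ and $d>1$), this applies directly. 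You already sense the difficulty in your final paragraph; the fix is simply to cite Jones rather than Wielandt and to drop the suggestion that the composite--composite case might be handled by purely classical means.
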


We will also need the following classical result of Jordan.

\begin{lemma}\label{Jordon}
Let $S_n\geq G$ be a primitive permutation group that contains a
cycle of length $p\leq n-3$ such that $p$ is prime. Then $G\geq
A_n$.
\end{lemma}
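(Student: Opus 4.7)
The plan is to carry out Jordan's classical argument: leverage the given $p$-cycle together with primitivity to produce a shorter non-identity element of $G$, iterate to obtain a $3$-cycle, and then invoke the standard fact that a primitive subgroup of $S_n$ containing a $3$-cycle must contain $A_n$.

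First I would let $\sigma\in G$ denote the given $p$-cycle and write $\Delta=\mathrm{supp}(\sigma)\subset\Omega=\{1,\ldots,n\}$, so $|\Delta|=p$ and $|\Omega\setminus\Delta|=n-p\geq 3$. The subgroup $\langle\sigma\rangle$ fixes $\Omega\setminus\Delta$ pointwise and acts transitively (indeed regularly, since $p$ is prime) on $\Delta$, so $\Delta$ is a \emph{Jordan set} for $G$: its pointwise stabilizer in $G$ is transitive on $\Delta$.

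Next I would exploit primitivity to find $g\in G$ with $g(\Delta)\cap\Delta$ nonempty and strictly contained in both $\Delta$ and $g(\Delta)$. Such a $g$ must exist: otherwise the $G$-translates of $\Delta$ would form a nontrivial block system (the translates would pairwise coincide or be disjoint), contradicting primitivity. Writing $\sigma'=g\sigma g^{-1}$, also a $p$-cycle, and setting $k=|\Delta\cap g(\Delta)|$ with $1\leq k\leq p-1$, the heart of the argument is to analyze $\langle\sigma,\sigma'\rangle$ acting on $\Delta\cup g(\Delta)$. Because $p$ is prime, every nonidentity power of $\sigma$ is again a $p$-cycle with full support $\Delta$, so an appropriate word in $\sigma$ and $\sigma'$ --- for instance a commutator $[\sigma^i,\sigma'^{\,j}]$ chosen to move only points in $\Delta\cap g(\Delta)$ and their immediate images --- produces a nontrivial element of $G$ whose support is strictly smaller than $p$. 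Iterating this reduction (using primitivity at each step to find a suitable conjugator, which requires $n-p\geq 3$ so that there is genuine room outside the current support) eventually yields a $3$-cycle in $G$.

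Finally, once $G$ is primitive and contains a $3$-cycle $\rho$, the normal closure of $\rho$ in $G$ is generated by $3$-cycles and is a nontrivial normal subgroup, so by primitivity it is transitive; a standard argument then shows that conjugates of $\rho$ under a transitive action generate all of $A_n$, giving $G\geq A_n$. The main obstacle is the combinatorial bookkeeping in the iterative reduction step: one has to choose the correct word in the two conjugate $p$-cycles so that the resulting support genuinely shrinks, and one must verify that the hypothesis $p\leq n-3$ is used sharply to keep primitivity available throughout the iteration.
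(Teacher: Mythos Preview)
The paper does not actually prove this lemma: its entire proof is the single line ``See, for example, \cite[Theorem 3.3E]{DM}.'' Your proposal instead sketches Jordan's classical argument directly, which is essentially what lies behind that citation, so you are supplying what the paper deliberately omits.

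Your outline is the right one, but two steps are soft. First, the reduction step does not work quite as you state it. Given two overlapping $p$-cycles $\sigma$ and $\sigma'=g\sigma g^{-1}$, a commutator $[\sigma^i,\sigma'^{\,j}]$ has support contained in $\Delta\cup g\Delta$, which has size $2p-k>p$; there is no obvious choice of $i,j$ that forces the support below $p$, and the primality of $p$ does not help here in the way you suggest. The standard textbook argument (as in Dixon--Mortimer) runs in the opposite direction: one shows that if two Jordan sets meet then their union is again a Jordan set, and then uses primitivity to \emph{enlarge} the Jordan set step by step until its complement has size at most $3$, at which point the pointwise stabiliser of that complement is transitive on a set of size $\geq n-3$ and a $3$-cycle can be extracted. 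Second, your account of where the hypothesis $p\le n-3$ is used is off: it is not needed ``so that there is genuine room outside the current support'' during the iteration, but rather at the end of the argument, to ensure the Jordan complement one reaches has size at least $3$ and to exclude the small exceptional primitive groups (affine and projective) that contain long cycles fixing only one or two points. If you tighten these two points --- replacing the shrinking-support heuristic by the Jordan-set-union lemma and relocating the use of $p\le n-3$ --- your sketch becomes the standard proof.
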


\begin{proof}
See, for example, \cite[Theorem 3.3E]{DM}.
\end{proof}

In an effort to construct infinitely many examples of alternating
groups with coprime strongly real Beauville structures we use the
above to prove the following.

\begin{lemma}
Let $n=4r>12$ where $r\in\mathbb{Z}^+$ is not divisible by $3$. Then
$A_n$ has a coprime strongly real Beauville structure.
\end{lemma}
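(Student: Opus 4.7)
The plan is to exhibit two generating pairs $\{x_1,y_1\}$ and $\{x_2,y_2\}$ of $A_n$ whose triples of element orders are coprime, and to display, for each pair, a single involution in $S_n$ that inverts both of its elements simultaneously; since $n=4r>12$ gives $n\neq 6$, conjugation by this (possibly odd) involution realises the outer automorphism of $A_n$ and yields strong reality.

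For the first generating pair I would take $x_1$ to be a permutation of cycle type $(4r-3,3)$ with support all of $\{1,\dots,n\}$. The hypothesis $3\nmid r$ yields $\gcd(4r-3,3)=1$, since $4r-3\equiv r\pmod 3$, so $o(x_1)=3(4r-3)$; and as both constituent cycles have odd length, $x_1\in A_n$. Because $x_1$ already has cycle type with coprime parts, Lemma~\ref{JonesLem} reduces the generation of $A_n$ by $\langle x_1,y_1\rangle$ to the mere transitivity of this subgroup. I would then select $y_1$ of a simple form so that (i) transitivity is manifest, (ii) the orders $o(y_1)$ and $o(x_1y_1)$ have all their prime factors already among those dividing $3(4r-3)$, and (iii) the natural involution $\tau_1\in S_n$ that reflects the $(4r-3)$-cycle about its midpoint and flips the 3-cycle inverts $y_1$ as well.

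For the second pair I would choose $x_2,y_2$ so that the orders $o(x_2),o(y_2),o(x_2y_2)$ have prime factors disjoint from those of the first triple. A convenient starting point is to take one of them to be an involution of $A_n$ (a product of $2r$ disjoint transpositions) and the other to be a cycle or short product of cycles of order coprime to the first-triple primes, adjusting supports by adding or removing fixed points to fine-tune $o(x_2y_2)$. Generation of $A_n$ is then verified either by Lemma~\ref{JonesLem}, when the pair exhibits an element of coprime cycle type, or by Jordan's Lemma~\ref{Jordon}, when a prime-length cycle of length at most $n-3$ can be found in a subgroup already shown to be primitive. Strong reality is secured by first fixing a suitable involution $\tau_2\in S_n$ and then designing $x_2,y_2$ to be $\tau_2$-conjugate to their inverses.

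The main obstacle is the simultaneous juggling of four constraints: membership in $A_n$, generation of $A_n$, coprimality of the two order triples, and the existence of a common inverting involution for each pair. In particular, once the first triple is fixed, the coprimality requirement forces the second triple into a narrow range of permissible orders, and verification will most likely require a small case analysis according to $r \bmod 3$ and to which small primes divide $4r\pm 1$. None of this is conceptually deep --- the combinatorial bookkeeping of cycle lengths is where the real work lies, with Lemmas~\ref{JonesLem} and \ref{Jordon} handling generation and the reflection trick handling strong reality.
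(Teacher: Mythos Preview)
Your plan differs substantially from the paper's construction, and it remains a sketch rather than a proof: neither $y_1$ nor the entire second pair is actually specified, and you yourself flag the ``combinatorial bookkeeping'' as unfinished. That is the gap --- the strategy is reasonable, but no concrete elements are exhibited and no orders are computed, so nothing has yet been verified.

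By contrast, the paper's argument is both explicit and structurally uniform. It writes down a Beauville structure of type
\[
\bigl((4r^2-1,\,2r+1,\,2r+1),\ (4r^2-9,\,2r-1,\,2r+3)\bigr),
\]
taking $x_1=(1,\ldots,2r+1)(2r+2,\ldots,4r)$, $y_1=(4r,4r-1,\ldots,2r)$ for the first pair, and $x_2=(1,\ldots,2r+3)(2r+4,\ldots,4r)$, $y_2=(4r,\ldots,2r+2)$ for the second. Both pairs have the \emph{same} shape --- a product of two disjoint odd cycles of coprime lengths together with a single odd cycle overlapping one of them --- so Lemma~\ref{JonesLem} handles generation uniformly, and a single ``reflection'' involution inverts each pair. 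Coprimality follows directly from the pairwise coprimality of $2r\pm1$ and $2r\pm3$.

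What the paper's approach buys is that no case analysis on $r\bmod 3$ or on the prime factorisation of $4r\pm1$ is needed, and no involutions appear inside the Beauville structure itself (avoiding the parity bookkeeping your second pair would require). Your idea of building the first pair around a cycle type $(4r-3,3)$ and the second around an involution is not wrong in principle, but it forces exactly the fiddly case-by-case work you anticipate, whereas the paper's symmetric choice of cycle lengths $2r\pm1$ and $2r\pm3$ sidesteps it entirely.
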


\begin{proof}
We will explicitly construct a strongly real Beauville structure of
type $((4r^2-1,2r+1,2r+1),(4r^2-9,2r-1,2r+3))$ which is clearly
coprime since each of $2r\pm1$ and $2r\pm3$ are pairwise coprime.\\

First consider the permutations
$$x_1:=(1,2,\ldots,2r+1)(2r+2,2r+3,\ldots,n),$$
$$y_1:=(n,n-1,\ldots,2r)$$
and
$$a:=(2r,2r+1)(2r-1,1)(2r-2,2)\cdots(r+1,r-1)(2r+2,n)(2r+3,n-1)\cdots(3r,3r+2).$$
Since $x_1$ is of cycle type $2r+1,2r-1>1$, both $x_1$ and $y_1$ are
even permutations and $\langle x_1,y_1\rangle$ is transitive we must
have that $\langle x_1,y_1\rangle=A_n$ by Lemma \ref{JonesLem}. Note
that since the product $x_1y_1$ and $y_1$ are both $2r+1$ cycles,
the conjugacy classes containing powers of $x_1$, $y_1$ and $x_1y_1$
must be those containing powers of $x_1$ since conjugacy classes of
cyclic subgroups of alternating groups are determined by cycle type.
Furthermore a direct calculation shows that $x_1^a=x_1^{-1}$ and
$y_1^a=y_1^{-1}$.\\

For our second triple we consider the permutations
$$x_2:=(1,2,\ldots,2r+3)(2r+4,2r+5,\ldots,4r),$$
$$y_1:=(n,n-1,\ldots,2r+2)$$
$$b:=(2r+2,2r+3)(2r+1,1)(2r,2)\cdots(3r+1,3r+3)(2r+4,4r)(2r+5,4r-1)\cdots(3r+1,3r+3).$$
By the same argument as the previous paragraph, this provides us
with a second triple for a strongly real Beauville structure for
$A_n$.
\end{proof}

\begin{lemma}\label{alternatings}
Let $r$ be a positive integer. If $r$ is a multiple of 6, then
$A_{2r}$ has a strongly real Beauville structure of type
$((r^2-1,r+1,r+1),(2r-1,2r-1,3))$. In particular $A_{2r}$ has a
coprime strongly real Beauville structure.
\end{lemma}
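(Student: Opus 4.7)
The plan is to exhibit explicit generating pairs of $A_{2r}$ with types $(r^2-1, r+1, r+1)$ and $(2r-1, 2r-1, 3)$, together with, for each pair, an involution that simultaneously inverts both generators. The products of the orders in the two types are $(r-1)(r+1)^3$ and $3(2r-1)^2$; when $r \equiv 0 \pmod 6$ one checks directly that $2r - 1$ is coprime to each of $r - 1$, $r + 1$, and $3$, and that neither $r \pm 1$ is divisible by $3$. Hence the types are coprime, which yields $(\dagger)$ automatically and proves the ``in particular'' clause as soon as the main claim is established.

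For the first pair I would take $x_1$ to be a product of disjoint cycles of lengths $r - 1$ and $r + 1$, for instance $x_1 = (1, 2, \ldots, r-1)(r, r+1, \ldots, 2r)$. Since $r$ is even, both cycle lengths are odd and coprime, so $x_1 \in A_{2r}$ has order $r^2 - 1$. I would then choose $y_1$ to be an $(r+1)$-cycle whose support intersects both cycles of $x_1$ in such a way that $x_1 y_1$ is again an $(r+1)$-cycle; this is a routine cycle-product calculation once the indices are fixed. Because $x_1$ has cycle structure $(r-1, r+1)$ with both parts greater than $1$ and coprime, and because $\langle x_1, y_1 \rangle$ is transitive by the overlap of supports, Lemma \ref{JonesLem} gives $\langle x_1, y_1 \rangle \geq A_{2r}$, with equality by parity.

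For the second pair I would set $x_2 = (1, 2, \ldots, 2r-1)$ (fixing $2r$) and define $y_2 := x_2^{-1} z$ where $z = (1, 2, 2r)$; a direct calculation shows $y_2 = (2, 2r, 2r-1, \ldots, 3)$, a $(2r-1)$-cycle fixing $1$, so that $x_2 y_2 = z$ has order $3$. Transitivity of $\langle x_2, y_2 \rangle$ is immediate since the two supports between them cover $\{1, \ldots, 2r\}$. The key generation step is a short primitivity argument: any nontrivial block $B$ containing $2r$ must be preserved by $x_2$ (as $x_2(2r) = 2r \in B$), and because $x_2$ restricted to $B$ can only consist of the fixed point $2r$ together with a full $x_2$-orbit, the block $B$ must equal $\{2r\}$ or the whole of $\{1, \ldots, 2r\}$, contradicting $1 < |B| < 2r$. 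Once primitivity is established, Lemma \ref{Jordon} applied to the $3$-cycle $z$ (with $3 \leq 2r - 3$ since $r \geq 6$) gives $\langle x_2, y_2 \rangle = A_{2r}$.

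Finally, I would construct the inverting involutions using the classical cycle-reversal trick. For the second pair, the involution $b = (2, 2r-1)(3, 2r-2) \cdots (r, r+1)$ fixes both $1$ and $2r$ and a direct check shows it inverts both $x_2$ and $y_2$; since $r$ is even this is a product of $r - 1$ transpositions, so $b$ is odd and provides the inversion via the outer automorphism of $A_{2r}$ (which exists as $2r \neq 6$). The analogous construction for the first pair is the principal obstacle: one must choose the overlap pattern between the support of $y_1$ and the two cycles of $x_1$ so that a single involution reverses all three cycles simultaneously. This is a combinatorial exercise in aligning reversals rather than a deep argument, but the index bookkeeping is what carries most of the technical weight of the proof.
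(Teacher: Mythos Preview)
Your approach is essentially the paper's: the second generating pair is literally the same as the paper's choice (your $y_2$ is the same permutation as the paper's $(2r,2r-1,\ldots,2)$, just written from a different starting point, and your $b$ coincides with theirs), and your first pair follows the same template of an $(r{+}1,r{-}1)$-shaped element together with an $(r{+}1)$-cycle, with generation via Lemma~\ref{JonesLem}. Your primitivity argument for the second pair via blocks is a harmless variant of the paper's argument (which observes two-transitivity from the $(2r{-}1)$-cycle before invoking Lemma~\ref{Jordon}).

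There are, however, two genuine gaps. First, you leave the construction of $y_1$ and the inverting involution $a$ for the first pair entirely unspecified, calling it ``a combinatorial exercise''. The paper carries this out explicitly: with $x_1=(1,\ldots,r{+}1)(r{+}2,\ldots,2r)$ it takes $y_1=(2r,2r{-}1,\ldots,r{+}1,r)$, checks that $x_1y_1$ is again an $(r{+}1)$-cycle, and writes down $a$ as a product of transpositions reversing each of the three cycles in the obvious way. This is not deep, but it is the content of the lemma, and your proposal stops short of it.

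Second, and more seriously, you treat the two inverting involutions as independent. The definition of a strongly real Beauville structure requires a \emph{single} automorphism $\phi$ (together with conjugating elements $g_1,g_2\in G$) that inverts both pairs; equivalently, your two involutions must lie in the same coset of $A_{2r}$ inside $S_{2r}$. The paper makes this point explicitly: both $a$ and $b$ have exactly two fixed points, hence each is a product of $r-1$ transpositions and (since $r$ is even) each is odd, so they differ only by an inner automorphism. You note that $b$ is odd but never mention that the involution for the first pair must also be odd, nor why this matters. Without this observation the argument does not establish that the structure is strongly real.
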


\begin{proof}
First consider the following permutations.
$$x_1:=(1,2,\ldots,r+1)(r+2,r+3,\ldots,2r)$$
$$y_1:=(2r,2r-1,\ldots,r+1,r)$$
$$a:=(r,r+1)(r-1,1)(r-2,2)\cdots(\frac{r}{2},\frac{r}{2}+2)(r+2,2r)(r+3,2r-1)\cdots(\frac{3}{2}r,\frac{3}{2}r+2)$$
Note that $x_1$ has cycle type $(r+1)$, $(r-1)$ and so has order
$r^2-1$, whilst $y_1$ and $x_1y_1$ are both $r+1$ cycles.
Furthermore, since $r$ is even both $x_1$ and $y_1$ are even
permutations. Since $\langle x_1,y_1\rangle$ is transitive we must
have that $\langle x_1,y_1\rangle=A_{2r}$ by Lemma \ref{JonesLem}.

Next, consider the following permutations.
$$x_2:=(1,2,\ldots,2r-1)$$
$$y_2:=(2r,2r-1,\ldots,2)$$
$$b:=(2,2r-1)(3,2r-2)\cdots(r,r+1)$$
Note that $x_2$ and $y_2$ both have cycle type $2r-1$ and $x_2y_2$
is the 3-cycle $(1,2r,2r-1)$. Furthermore, since $r$ is even both
$x_2$ and $y_2$ are even permutations. Since $\langle
x_2,y_2\rangle$ is transitive and contains $2r-1$ cycles it must be
two transitive and therefore primitive. Since $\langle
x_2,y_2\rangle$ also contains a 3-cycle it follows from Lemma
\ref{Jordon} that $\langle x_2,y_2\rangle=A_{2r}$.

Direct calculation shows that $x_1^a=x_1^{-1}$ and $y_1^a=y_1^{-1}$
whilst $x_2^b=x_2^{-1}$ and $y_2^b=y_2^{-1}$. The automorphisms
defined by conjugation by $a$ and $b$ clearly differ only by an
inner automorphism since $a$ and $b$ are both elements of order 2
with precisely two fixed points. It follows that
$\{x_1,x_2,y_1,y_2\}$ is a strongly real Beauville structure.
\end{proof}

Note that $r$ is a multiple of 6 by hypothesis and so then $r^2-1$
is coprime to $3(2r-1)$ and so the above Beauville structure is
coprime. Furthermore, the above construction breaks down if $r$ is
not a multiple of 6: if $r$ is not even, then $r\pm1$ are not
coprime and so Lemma \ref{JonesLem} no longer applies and if $r$ is
not a multiple of 3 then $r^2-1=(r+1)(r-1)$ is a multiple of 3 and
so the Beauville structure constructed is no longer coprime. The
above construction will, more generally, provide strongly real
Beauville structures for a wider class of values of $r$, just not
necessarily Beauville structures that are coprime.

In light of the above we make the following conjecture.

\begin{conjecture}
If $n\geq6$ then the alternating group $A_n$ has a strongly real
coprime Beauville structure.
\end{conjecture}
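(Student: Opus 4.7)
The plan is to extend the explicit constructions of Lemma~\ref{alternatings} and the preceding unnumbered lemma across the remaining residue classes of $n$, reducing the conjecture to a finite list of small cases that can be dispatched by direct computer calculation. Between them, those two lemmas already cover $n\equiv 0\pmod{12}$ together with the infinite subfamilies of $n\equiv 4,8\pmod{12}$ with $n>12$, so what remains to handle is $n$ odd, $n\equiv 2,6,10\pmod{12}$, and the finite list of small exceptional values.

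For each residue class I would write $n$ as a sum $n=a+b+c+\cdots$ of a few positive integers chosen so that the first generating pair $(x_1,y_1)$ consists of an element $x_1$ that is a product of two disjoint cycles of coprime lengths $a,b>1$ (so $x_1$ has order $ab$ and, once transitivity of $\langle x_1,y_1\rangle$ is established, Lemma~\ref{JonesLem} forces $\langle x_1,y_1\rangle=A_n$) together with a single long cycle $y_1$ whose product with $x_1$ is again a cycle of controlled length. The second pair $(x_2,y_2)$ would be built from a pair of long odd cycles whose product is a short cycle of prime length $p\le n-3$, so that primitivity together with Lemma~\ref{Jordon} gives $\langle x_2,y_2\rangle=A_n$. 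Throughout, the cycle lengths are engineered so that $o(x_1)o(y_1)o(x_1y_1)$ and $o(x_2)o(y_2)o(x_2y_2)$ are coprime, which both forces condition $(\dagger)$ automatically and removes any need to analyse whether $S_n$-classes split in $A_n$. Strong reality is arranged by exhibiting a single involution $t\in S_n$ which reverses every cycle of $x_i$ and of $y_i$ simultaneously, built by pairing matched positions within each cycle and pairing up the fixed points; the parity of $t$ can be adjusted by absorbing an extra transposition on fixed points, and where $t$ is odd one interprets conjugation by $t$ as the outer automorphism of $A_n$.

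The main obstacles will be arithmetic rather than structural. First, when $n$ is prime, lies just below a prime, or has very few divisors, it becomes hard to partition $n$ into parts whose lengths are mutually coprime and whose combined orders are jointly coprime to those of the second triple; these residues will likely require a slightly ad hoc variant, perhaps using three coprime cycle lengths instead of two, or pre-reserving a fixed transposition which is then absorbed into the inverting involution. Second, for small $n$ (say $n\le 20$) the cycle-length budget is simply too tight for a uniform template to deliver the required coprimality, so an initial batch of base cases $A_6,A_7,\ldots$ must be checked by direct computation in GAP or Magma, entirely in keeping with the methodology already used in the paper for simple groups up to order $10^8$. Once the residue-class templates are in place and the finite list of small alternating groups is verified computationally, the conjecture follows.
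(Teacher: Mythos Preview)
This statement is a \emph{conjecture} in the paper, not a theorem: the paper does not prove it. Immediately after stating it, the author records only that the conjecture has been verified computationally for $A_n$ with $n\leq30$. The two lemmas you invoke (Lemma~\ref{alternatings} and the unnumbered lemma preceding it) are the paper's \emph{partial progress} toward the conjecture, covering certain residue classes of $n$; they are not ingredients in a completed argument, and the paper offers no proof beyond them.

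Your proposal is therefore not a proof to be compared against the paper's, but a research programme for attacking an open problem. As a programme it is entirely in the spirit of the paper's own approach---build explicit cycle-based generating pairs with controlled cycle types, invoke Lemmas~\ref{JonesLem} and~\ref{Jordon} to certify generation, and exhibit an inverting involution for strong reality---and your identification of the residue classes already covered and those remaining is accurate. But the proposal does not actually carry out any of the promised constructions: the phrases ``I would write,'' ``the main obstacles will be,'' and ``will likely require a slightly ad hoc variant'' are promissory notes, not arguments. The arithmetic obstructions you flag (prime $n$, $n$ with few divisors, tight coprimality budgets for small $n$) are precisely why the paper leaves the statement as a conjecture. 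Until the templates are written down explicitly for each residue class, their generation and coprimality verified, and the finite exceptional list dispatched, there is no proof here to assess.
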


The author has verified this conjecture computationally for the
groups $A_n$ with $n\leq30$.

\section{Characteristically Simple Groups}

Another class of finite groups that has recently been studied from
the viewpoint of Beauville constructions, and seems like fertile
ground for providing further examples of strongly real Beauville
groups, are the characteristically simple groups that we define as
follows (the definition commonly given is somewhat different from
that below but in the case finite groups it is equivalent to this).

\begin{definition}
A finite group $G$ is said to be \textbf{characteristically simple}
if $G$ is isomorphic to some direct product $H^k$ where $H$ is a
finite simple groups.
\end{definition}

For example, as we saw in Theorem \ref{ab}, if $p>3$ is prime then
the abelian Beauville groups isomorphic to
$\mathbb{Z}_p\times\mathbb{Z}_p$ are characteristically simple.

Characteristically simple Beauville groups have recently been
investigated by Jones in \cite{Jones1,Jones2} where the following
conjecture is discussed.

\begin{conjecture}
Let $G$ be a finite non-abelian characteristically simple group.
Then $G$ is a Beauville group if and only if it is a $2$-generator
group not isomorphic to $A_5$.
\end{conjecture}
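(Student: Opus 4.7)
The plan is to split the biconditional into its two directions, with the "only if" being essentially free and all substantive content lying in the "if". For "only if": a Beauville group is 2-generated by definition (the first sentence of Section \ref{FSG}), and $A_5$ is recalled there to not be a Beauville group, so the hypothesis $G \not\cong A_5$ is necessary.

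For the "if" direction, write $G \cong H^k$ with $H$ a non-abelian finite simple group not isomorphic to $A_5$. By the results of Fairbairn-Magaard-Parker, Garion-Larsen-Lubotzky, and Guralnick-Malle cited in Section \ref{FSG}, $H$ is itself a Beauville group. The natural strategy is to lift: using the classical characterization (going back to P.~Hall) that $H^k$ is 2-generated exactly when there are at least $k$ orbits of $\mathrm{Aut}(H)$ on generating pairs of $H$, a generating pair of $H^k$ is obtained from a $k$-tuple of pairwise inequivalent generating pairs of $H$. The cleanest way to guarantee condition $(\dagger)$ is to produce a coprime Beauville structure in the sense defined earlier in the paper: find two such $k$-tuples of generating pairs of $H$ whose global type triples (the componentwise least common multiples of the orders of $x$, $y$, $xy$ across the factors) are mutually coprime between the two tuples; coprimality then forces $(\dagger)$ automatically.

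The principal obstacle, and where the genuine work lies, is controlling the supply of inequivalent generating pairs of $H$ realising the required variety of coprime type data. Let $d(H)$ denote the largest $k$ for which $H^k$ is 2-generated; one must produce at least $2d(H)$ pairwise $\mathrm{Aut}(H)$-inequivalent generating pairs of $H$, partitioned into two families whose overall type triples are mutually coprime. For large $H$ this should follow from the abundance of conjugacy classes of elements of prime-power order together with the probabilistic generation results of Liebeck-Shalev and the character-theoretic methods already deployed in the cited Beauvillification theorems for simple groups. The intrinsically hardest cases will be those small simple groups where $d(H)$ may rival the total count of distinct coprime Beauville type triples available on $H$, forcing an ad hoc treatment of small $\mathrm{PSL}_2(q)$, small alternating groups, and the smaller sporadics group by group and power by power. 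Jones's work \cite{Jones1,Jones2} already disposes of many such cases, and the remaining task is to fill in the explicit constructions for the residual small parameters.
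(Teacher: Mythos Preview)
The statement you are attempting to prove is presented in the paper as a \emph{Conjecture}, not a theorem. The paper offers no proof; it merely records that Jones \cite{Jones1,Jones2} has verified the conjecture when the simple factor $H$ is an alternating group, one of $\mathrm{PSL}_2(q)$, $\mathrm{PSL}_3(q)$, $\mathrm{PSU}_3(q)$, a Suzuki or small Ree group, or a sporadic group. There is therefore no ``paper's own proof'' to compare against, and your proposal is at best an outline of a possible attack on an open problem, something you yourself concede at the end (``the remaining task is to fill in the explicit constructions'').

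Beyond that, there is a concrete obstruction to your proposed method. You intend to secure condition $(\dagger)$ via coprimality of the two global type triples, and you note correctly that a generating pair of $H^k$ corresponds (by Hall's criterion) to a $k$-tuple of pairwise $\mathrm{Aut}(H)$-inequivalent generating pairs of $H$. But coprimality of the two type triples forces the two $k$-tuples of $\mathrm{Aut}(H)$-orbits to be disjoint, so you would need at least $2k$ distinct orbits. By the very definition of $d(H)$ there are exactly $d(H)$ such orbits, and hence your approach is impossible for every $k$ with $d(H)/2 < k \le d(H)$ --- precisely the range in which the conjecture is hardest. (Even well below that threshold the condition is severe: for small $H$ the few primes dividing $|H|$ leave almost no room to split the available types into two coprime pools of the required size.) Jones's actual arguments in \cite{Jones1,Jones2} do not rely on global coprimality; they verify $(\dagger)$ by more delicate control of how conjugacy classes project onto the individual factors. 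Your sketch would therefore have to abandon the coprimality shortcut and engage with $(\dagger)$ directly before it could plausibly cover the full range $1\le k\le d(H)$.
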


In particular, the main results of \cite{Jones1,Jones2} verify this
conjecture in the cases where $H$ is any of the alternating groups;
the linear groups $PSL_2(q)$ and $PSL_3(q)$; the unitary groups
$PSU_3(q)$; the Suzuki groups $^2B_2(2^{2n+1})$; the small Ree
groups $^2G_2(3^{2n+1})$ and the sporadic simple groups.

For large values of $k$, the group $H^k$ will not be 2-generated
despite the fact that $H$ will be as discussed in Section \ref{FSG}.
The values of $k$ for which $H^k$ is 2-generated can be surprisingly
large. For example, a special case of the results alluded to in the
previous paragraph is the somewhat amusing fact that
$$A_5\times A_5\times A_5\times A_5\times A_5\times A_5\times A_5\times A_5\times A_5\times A_5\times$$ $$A_5\times A_5\times A_5\times A_5\times A_5\times A_5\times
A_5\times A_5\times A_5$$ is a Beauville group, despite the fact
that $A_5$ itself is not a Beauville group.

In general, the full automorphism group of $H^k$ will be the wreath
product $\mbox{Aut}(H)\wr S_k$ where $S_k$ is the $k^{th}$ symmetric group
acting on the product by permuting the groups $H$. This bounteous
supply of automorphisms makes it likely that characteristically
simple Beaville groups are in general strongly real.

\begin{question}
Which characteristically simple Beauville groups are strongly real?
\end{question}

As a more specific conjecture on these matters we assert the following.


\begin{conjecture}\label{CharSimpConj}
If $H$ is a finite simple group of order greater than $3$, then the group $H\times
H$ is a strongly real Beauville group.
\end{conjecture}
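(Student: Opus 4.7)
The conjecture splits naturally into the abelian and non-abelian cases. If $H$ is a finite simple abelian group of order greater than $3$, then $H \cong \mathbb{Z}_p$ for some prime $p \geq 5$, so $H \times H \cong \mathbb{Z}_p \times \mathbb{Z}_p$ with $\gcd(p,6)=1$. Theorem \ref{ab} immediately gives that $H \times H$ is a Beauville group, and Corollary \ref{abCor} upgrades this to strongly real. So the real content of the conjecture is the non-abelian case.

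For non-abelian simple $H$, the crucial asset is that $\mbox{Aut}(H \times H) = \mbox{Aut}(H) \wr S_2$ contains the swap involution $\tau$ sending $(g,h) \mapsto (h,g)$, which essentially doubles the pool of automorphisms available for inverting generators. My first attempt would be the construction
\[
X_i := (x_i, y_i), \qquad Y_i := (y_i, x_i) \qquad (i = 1,2),
\]
where $(x_i, y_i)$ is a generating pair of $H$ chosen with $o(x_i) \neq o(y_i)$. Then $o(X_i) = o(Y_i) = \mbox{lcm}(o(x_i), o(y_i))$, and since $y_i x_i$ is conjugate to $x_i y_i$ in $H$ we have $o(X_i Y_i) = o(x_i y_i)$. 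The projections of $\langle X_i, Y_i\rangle$ onto each factor of $H \times H$ are surjective, so by Goursat's lemma (with $H$ simple non-abelian) $\langle X_i, Y_i\rangle$ is either all of $H \times H$ or the graph of an automorphism of $H$ swapping $x_i$ and $y_i$; the inequality $o(x_i) \neq o(y_i)$ rules out the graph case.

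For the strongly real requirement, if there exist $\phi_i \in \mbox{Aut}(H)$ and $g_i \in H$ with $g_i \phi_i(x_i) g_i^{-1} = x_i^{-1}$ and $g_i \phi_i(y_i) g_i^{-1} = y_i^{-1}$, then the diagonal automorphism $(\phi_i, \phi_i)$ together with conjugation by $(g_i, g_i)$ inverts both $X_i$ and $Y_i$; this handles every $H$ already known to be strongly real Beauville. When no such $\phi_i$ exists in $H$ (as for $A_5$, $M_{11}$ and $M_{23}$), the swap buys extra room: $\tau \circ (\phi, \psi)$ sends $X_i = (x_i, y_i)$ to $(\psi(y_i), \phi(x_i))$, which equals $X_i^{-1}$ iff $\psi(y_i) = x_i^{-1}$ and $\phi(x_i) = y_i^{-1}$, and similarly for $Y_i$; this only demands that $x_i$ and $y_i^{-1}$ lie in a common $\mbox{Aut}(H)$-orbit. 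The most natural way to discharge $(\dagger)$ is to arrange the triples $(o(x_1), o(y_1), o(x_1 y_1))$ and $(o(x_2), o(y_2), o(x_2 y_2))$ to be coprime in $H$, since this forces coprimality of the inflated orders in $H \times H$; pairs with this property are supplied for many classes of $H$ by \cite{FairbairnMagaardParker, FuertesJones11, Jones1, Jones2} and by Theorem \ref{Suz}.

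The main obstacle I expect is the three exceptional cases $A_5$, $M_{11}$ and $M_{23}$. No strongly real Beauville structure on $H$ itself is available to bootstrap from, so the argument must genuinely use $\tau$; moreover, $A_5$ admits no coprime pair of generating triples at all, so $(\dagger)$ must be verified directly by controlling conjugacy classes, much as Jones does for iterated powers of $A_5$ in \cite{Jones1}. For each of these three groups one would proceed by direct computation in the given group, which is feasible but not automatic. For every other non-abelian simple $H$, the strongly real coprime structures that are now routinely available descend to $H \times H$ via the construction above, and what remains is a careful but essentially mechanical verification that $(\dagger)$ holds and that the graph case of Goursat has been excluded.
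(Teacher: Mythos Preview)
The statement you are attempting to prove is labelled a \emph{conjecture} in the paper, and the paper does not prove it. Immediately after stating it the author writes that for non-abelian $H$ the conjecture ``seems rather distant given that, at the time of writing, we have neither a solution to Conjecture~\ref{simpconj} nor do we even know if $H\times H$ for a simple group $H$ is even a Beauville group''. The paper then offers only partial evidence: Theorem~\ref{thm} (a reduction lemma), its Corollary listing the families for which the hypotheses are currently verified, and Lemma~\ref{Mathieu} handling $A_5$, $\mathrm{M}_{11}$, $\mathrm{M}_{23}$ by explicit computation.

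Your proposal is therefore not a proof but a strategy, and the gap is precisely where you write that ``the strongly real coprime structures that are now routinely available descend to $H\times H$''. Such structures are \emph{not} available for arbitrary non-abelian simple $H$: they are known only for the families enumerated in the paper's Corollary (alternating groups, $\mathrm{PSL}_2(q)$, Suzuki groups, sporadic groups, and groups of small order). For a general group of Lie type nothing of the sort is established, so your reduction leaves the conjecture exactly as open as before.

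It is worth noting that your construction $X_i=(x_i,y_i)$, $Y_i=(y_i,x_i)$ differs from the paper's Theorem~\ref{thm}, which instead takes $g_1=(x_1,x_2)$, $h_1=(y_1,y_2)$, $g_2=(x_2,x_1)$, $h_2=(y_2,y_1)$, mixing the two triples across the two factors. Both routes require the same input, namely a coprime strongly real Beauville structure on $H$ inverted by a single automorphism, and so both stall at the same point. Your use of the swap $\tau$ for the exceptional groups is in the right spirit---it is exactly how the paper's explicit permutations in Lemma~\ref{Mathieu} work---but that lemma is proved by writing down concrete elements and checking everything directly, not by a general argument; your sketch of the swap mechanism does not by itself supply those elements or verify $(\dagger)$.
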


Note that Corollary \ref{abCor} tells us that this is true for all
abelian characteristically simple Beauville groups. For the
nonabelian characteristically simple Beauville groups this
conjecture seems rather distant given that, at the time of writing,
we have neither a solution to Conjecture \ref{simpconj} nor do we even know if $H\times H$ for a simple group $H$ is even a Beauville group, let alone a strongly real one. Anyone
tempted to extend the above conjecture to the products of a larger
number of copies of simple groups should see the remarks following
Lemma \ref{Mathieu}, although some hope is provided by the results proven in Section \ref{alts}.

\begin{theorem}\label{thm}
Let $G$ be a strongly real Beauville group with coprime strongly
real Beauville structure $\{\{x_1,y_1\},\{x_2,y_2\}\}$. Furthermore, suppose
that there exists an automorphism $\phi\in\mbox{Aut}(G)$ such that
$$\phi(x_1)=x_1^{-1}\mbox{, }\phi(y_1)=y_1^{-1}\mbox{, }\phi(x_2)=x_2^{-1}\mbox{ and }\phi(y_2)=y_2^{-1}.$$
Then the group $G\times G$ is a strongly real Beauville group.
\end{theorem}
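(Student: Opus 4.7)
The plan is to consider the pair of generating pairs
\[
\bigl\{\{(x_1,x_2),(y_1,y_2)\},\{(x_2,x_1),(y_2,y_1)\}\bigr\}
\]
on $G\times G$, together with the automorphism $\Phi:=\phi\times\phi\in\mbox{Aut}(G\times G)$ and trivial conjugating elements. Three things need verification: each pair generates $G\times G$, condition $(\dagger)$ holds, and $\Phi$ witnesses the strongly-real property. The key structural choice is the swap of the two coordinates when passing from the first triple to the second; this is what converts the ``between-triple'' coprimality of the original structure in $G$ into the coordinate-wise disjointness needed to verify $(\dagger)$ on $G\times G$.

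For generation, let $H=\langle(x_1,x_2),(y_1,y_2)\rangle$. The coprimality hypothesis forces $\gcd(o(x_1),o(x_2))=\gcd(o(y_1),o(y_2))=1$, so $(x_1,x_2)^{o(x_2)}=(x_1^{o(x_2)},e)$, and since $x_1^{o(x_2)}$ still generates $\langle x_1\rangle$ we obtain $(x_1,e)\in H$; the same argument produces $(y_1,e)$, $(e,x_2)$ and $(e,y_2)$ in $H$. Because $\langle x_1,y_1\rangle=\langle x_2,y_2\rangle=G$, this shows $G\times\{e\}\subseteq H$ and $\{e\}\times G\subseteq H$, so $H=G\times G$. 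The same reasoning handles $\langle(x_2,x_1),(y_2,y_1)\rangle$.

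For condition $(\dagger)$, since powers and conjugation in a direct product act coordinate-wise, every element of $\Sigma((x_1,x_2),(y_1,y_2))$ has first coordinate in $\Sigma(x_1,y_1)$ and second coordinate in $\Sigma(x_2,y_2)$; symmetrically, every element of $\Sigma((x_2,x_1),(y_2,y_1))$ has first coordinate in $\Sigma(x_2,y_2)$ and second coordinate in $\Sigma(x_1,y_1)$. Coprimality of the original structure forces $\Sigma(x_1,y_1)\cap\Sigma(x_2,y_2)=\{e\}$ (the standard observation immediately following Definition \ref{DagDef}), so any element lying in both $\Sigma$'s on $G\times G$ must have both coordinates equal to $e$, and $(\dagger)$ holds.

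Finally, $\Phi=\phi\times\phi$ is an automorphism of $G\times G$, and by hypothesis it inverts each of $(x_1,x_2)$, $(y_1,y_2)$, $(x_2,x_1)$, $(y_2,y_1)$ coordinate-wise, so taking both conjugating elements to be the identity of $G\times G$ witnesses the strongly-real property. The whole argument is essentially formal once the right structure is chosen, so the only genuinely creative step — and the only real obstacle — is realising that one must swap the coordinates in the second triple: without the swap, both triples on $G\times G$ would yield \emph{identical} order products, immediately destroying any chance of applying the coprimality criterion for $(\dagger)$.
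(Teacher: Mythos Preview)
Your proof is correct and follows essentially the same approach as the paper's own proof: the same choice of generators $(x_1,x_2),(y_1,y_2)$ and $(x_2,x_1),(y_2,y_1)$, the same generation argument via coprime powers, and the same automorphism $\phi\times\phi$. In fact you are more thorough than the paper, which never explicitly verifies condition $(\dagger)$ for the new structure on $G\times G$; your coordinate-wise argument fills that gap cleanly.
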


\begin{proof}
Consider the following elements of $G\times G$
$$g_1=(x_1,x_2)\mbox{, }h_1=(y_1,y_2)\mbox{, }g_2=(x_2,x_1)\mbox{ and }h_2=(y_2,y_1)$$
The pair $\{g_1,h_1\}$ generate the whole of $G\times G$ since the
elements $g_1^{o(x_2)}$ and $h_1^{o(y_2)}$ generate the first factor
whilst the elements $g_1^{o(x_1)}$ and $h_1^{o(y_1)}$ generate the
second factor thanks to our hypothesis that $o(x_1)o(x_2)o(x_1y_1)$
is coprime to $o(x_2)o(y_2)o(x_2y_2)$. Similarly $\langle
g_2,h_2\rangle= G\times G$.

We define an automorphism $\psi\in\mbox{Aut}(G\times G)$ such that for every $(g,h)\in G\times G$
$$\psi(g,h)=(\phi(g),\phi(h)).$$
This automorphism clearly makes the above Beauville structure for
$G\times G$ a strongly real Beauville structure.
\end{proof}

\begin{cor}
Conjecture \ref{CharSimpConj} is true for each of the following
groups.
\begin{enumerate}
\item[(a)] The alternating groups $A_n$ for $n\geq6$;
\item[(b)] The linear groups $PSL_2(q)$ for prime powers $q>5$;
\item[(c)] The Suzuki groups $^2B_2(2^{2n+1})$;
\item[(d)] All simple groups of order at most $100\,000\,000$;
\item[(e)] The sporadic simple groups.
\end{enumerate}

\end{cor}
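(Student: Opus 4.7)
The strategy is to reduce each case to verifying the hypothesis of Theorem \ref{thm}: for every simple group $H$ in the list, exhibit a coprime strongly real Beauville structure for $H$ with the stronger property that a single automorphism $\phi\in\mbox{Aut}(H)$ inverts all four generators at once (that is, one can take the conjugators $g_1=g_2=e$). Once such a structure is in hand, Theorem \ref{thm} immediately delivers a strongly real Beauville structure for $H\times H$.

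Case (c), the Suzuki groups, falls straight out of Theorem \ref{Suz}: the type $((q-1,q-1,q-1),(d_1,d_2,2))$ is coprime because each of $d_1$, $d_2$ and $2$ is coprime to $q-1$, and the proof of that theorem constructs both pairs so that conjugation by the single involution $t_1$ inverts all of $x_1,y_1,x_2,y_2$. Indeed, for the first pair this is because $t_1(t_1t_j)t_1=t_jt_1=(t_1t_j)^{-1}$ for $j=2,3$, while for the second pair it is checked explicitly. For parts (a), (b) and (e) the plan is to revisit the explicit generators of Fuertes and Gonz\'{a}lez-Diez for the alternating groups, of Fuertes and Jones for $PSL_2(q)$, and of \cite{FairbairnExceptional} for the sporadic groups, and to check (or slightly adjust) in each case that the resulting structure is coprime and that a single inverting automorphism can be chosen uniformly for both pairs; replacing a generating pair by a suitable conjugate, as remarked just after the definition of strongly real, absorbs any stray conjugator. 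Case (d) is then dispatched by direct computation in GAP or Magma across the finite list of non-abelian simple groups of order at most $10^8$.

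The main obstacle is case (a). The Fuertes--Gonz\'{a}lez-Diez structures are not coprime, so genuinely new structures for $A_n$ are needed. A natural approach is to take each pair $\{x_i,y_i\}$ to consist of permutations built from disjoint cycles whose cycle-lengths within a pair are coprime and whose orders across the two pairs are again coprime; force $\langle x_i,y_i\rangle=A_n$ by transitivity together with Jordan-type primitivity arguments; and realise the inverting automorphism as conjugation in $S_n$ by a single involution that reflects the cycle supports of all four permutations simultaneously. The delicate part is meeting the arithmetic constraints --- pairwise coprimality of the two types, evenness of every generator, and primitivity --- uniformly across enough congruence classes of $n$ to cover every $n\geq 6$, which is likely to require splitting into several sub-constructions depending on $n$ modulo small primes.
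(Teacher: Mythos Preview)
Your overall plan---reduce to Theorem \ref{thm} by finding a coprime strongly real structure on each simple group $H$ with a single inverting automorphism---is exactly what the paper does for parts (b), (c) and (d), and your treatment of the Suzuki case is on the mark. But there is a genuine gap in part (e), and to a lesser extent in (d): the Mathieu groups M$_{11}$ and M$_{23}$ are \emph{not} strongly real Beauville groups at all (and $A_5$ is not even Beauville), so there is no structure on $H$ to feed into Theorem \ref{thm}. Your sentence about ``revisit[ing] \ldots the structures of \cite{FairbairnExceptional} for the sporadic groups'' cannot succeed for these two, because no such structures exist. The paper handles precisely these exceptions by a separate device (Lemma \ref{Mathieu}): one works directly in $G\times G$ and uses the outer automorphism that swaps the two factors, exhibiting explicit permutations on $11+11$, $23+23$ and $5+5$ points. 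This mechanism is orthogonal to Theorem \ref{thm} and is essential; without it the sporadic case is incomplete.

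For part (a) your approach is legitimate but diverges from the paper's. You propose to manufacture coprime strongly real structures on $A_n$ itself and then invoke Theorem \ref{thm}; the paper instead abandons Theorem \ref{thm} for the alternating groups and in Section \ref{alts} builds strongly real structures directly on $A_n^k$ (so in particular on $A_n\times A_n$), using pairs of the shape $\bigl((1,\ldots,m),(m,\ldots,n)\bigr)$ together with Jones's primitivity lemmas and a single reflecting involution in $S_n$. Your route would certainly work if the arithmetic can be made uniform, and indeed an attempt along those lines appears (commented out) in the paper; the direct $A_n^k$ construction sidesteps the coprimality bookkeeping at the cost of giving up the tidy factorisation through Theorem \ref{thm}.
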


\begin{proof}
For part (a) the results proved in Section \ref{alts} provide a strongly real Beauville structure for $A_n\times A_n$ for sufficiently large $n$, the smaller cases being straightforward calculations that are easily performed separately.

For part (b) we note that the strongly real Beauville structures
constructed by Fuertes and Jones in \cite{FuertesJones11} for the
groups $PSL_2(q)$ satisfy the hypotheses of Theorem \ref{thm}.

For part (c) we note that the strongly real Beauville structures for
$^2B_2(2^{2n+1})$ we constructed in Theorem \ref{Suz} are coprime
and satisfy the hypotheses of Theorem \ref{thm}.

For part (d) we note that the author's computations alluded to in
Section 2 were performed in such a way that the hypotheses of
Theorem \ref{thm} are satisfied.

Finally for part (e) we observe that for all the sporadic groups,
apart from the Mathieu groups M$_{11}$ and M$_{23}$, the structures
given by the author in \cite{FairbairnExceptional} satisfy the
hypotheses of Theorem \ref{thm}. The groups M$_{11}$ and M$_{23}$
are dealt with separately in Lemma \ref{Mathieu} below.
\end{proof}

We remark that the strongly real Beauville structures for the
quasisimple groups $SL_2(q)$ where $q>5$ constructed by Fuertes and
Jones in \cite{FuertesJones11} also satisfy the hypotheses of
Theorem \ref{thm} and so the groups $SL_2(q)\times SL_2(q)$ are also
strongly real.

Unfortunately, Theorem \ref{thm} cannot be applied to the strongly
real Beauville structures constructed by Fuertes and
Gonz\'{a}lez-Diez in \cite{FuertesGD10} for the symmetric and
alternating groups. This is because the types of the Beauville
structures in \cite{FuertesGD10} fail to satisfy the coprime
hypothesis since their structures use several elements of order 2. We return to this point in Section \ref{alts}.

Comparing the statement of Conjecture \ref{simpconj} with the
statement of Conjecture \ref{CharSimpConj} the reader should
immediately be asking ``what about the alternating group A$_5$ and the Matheiu groups M$_{11}$ and
M$_{23}$?" This concern is immediately addressed by the following
further piece of evidence for Conjecture \ref{CharSimpConj}.

\begin{lemma}\label{Mathieu}
The groups A$_5\times$A$_5$, M$_{11}\times$M$_{11}$ and M$_{23}\times$M$_{23}$ are
strongly real Beauville groups.
\end{lemma}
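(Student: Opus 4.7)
The strategy is to handle $M_{11}$ and $M_{23}$ uniformly by exploiting conjugacy classes that are not self-inverse, and to dispatch $A_5$ by an explicit (and essentially computational) check. The key device throughout is the coordinate-swap automorphism $\sigma \in \mathrm{Aut}(H \times H)$, $\sigma(a, b) = (b, a)$, which satisfies $\sigma(c, c^{-1}) = (c, c^{-1})^{-1}$ for every $c \in H$.

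For $H = M_{11}$ and $H = M_{23}$, recall that $\mathrm{Aut}(H) = \mathrm{Inn}(H)$ and that $H$ possesses classes of elements not conjugate to their inverses: of orders $8$ and $11$ for $M_{11}$, and of orders $7$, $11$, $14$, $15$ and $23$ for $M_{23}$. Select two generating pairs $(x_1, y_1)$ and $(x_2, y_2)$ of $H$ of coprime types, with each $x_i$ lying in one of these non-real classes (for instance $x_1$ of order $11$ and $x_2$ of order $8$ for $M_{11}$; $x_1$ of order $23$, $x_2$ of order $7$ for $M_{23}$). In $H \times H$ define
$$X_i = (x_i, x_i^{-1}), \qquad Y_i = (y_i, y_i^{-1}) \qquad (i = 1, 2).$$
By Goursat's lemma, combined with $\mathrm{Aut}(H) = \mathrm{Inn}(H)$, the subgroup $\langle X_i, Y_i \rangle$ is either $H \times H$ or a twisted diagonal $\{(a, g a g^{-1}) : a \in H\}$; the latter is excluded because it would force some $g \in H$ to invert $x_i$ by conjugation, contradicting the non-reality of the class of $x_i$. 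Since $(c, c^{-1})$ has order $o(c)$ in $H \times H$, the type of each of our pairs in $H \times H$ coincides with the type of the underlying pair in $H$, so the coprimality of the two original types yields condition $(\dagger)$ at once. Finally $\sigma$ inverts both $X_i$ and $Y_i$ for each $i$, establishing strong reality.

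For $H = A_5$ the above device is unavailable, since every class of $A_5$ is inverse-closed; nevertheless $A_5 \times A_5$ is known to be a Beauville group by Jones's work on characteristically simple groups, so the task reduces to exhibiting a strongly real structure. The natural template is to take two generating pairs of $A_5 \times A_5$ of the form $((x_i, u_i), (y_i, v_i))$, where $(x_i, y_i)$ and $(u_i, v_i)$ generate $A_5$ with \emph{distinct} types: since automorphisms of $A_5$ preserve element orders, Goursat's lemma cannot collapse such a pair to a twisted diagonal, so it generates all of $A_5 \times A_5$. With $|A_5 \times A_5| = 3600$ and $\mathrm{Aut}(A_5 \times A_5) = S_5 \wr \mathbb{Z}_2$ acting very richly on the group, both $(\dagger)$ and the existence of a simultaneous inverter can be verified by a short explicit computation.

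The principal obstacle is the $A_5$ case: since every element of $A_5$ has order dividing $15$, no coprime-type Beauville structure on $A_5 \times A_5$ is available, so the intersection $(\dagger)$ must be checked by tracking conjugacy classes of $A_5 \times A_5$ coordinate-wise rather than by a one-line coprimality argument. The Mathieu cases, by contrast, follow a uniform recipe once the non-real classes are in hand.
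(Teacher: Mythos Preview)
Your approach for $M_{11}$ and $M_{23}$ is genuinely different from the paper's and conceptually cleaner. The paper simply writes down explicit permutations on $11+11$ and $23+23$ points and asserts (for direct verification) that they form strongly real Beauville structures of types $((11,11,11),(8,8,8))$ and $((23,23,23),(11,11,11))$, the inverting automorphism being in each case the factor-swap. What the paper does not isolate, and you do, is the mechanism: elements of the shape $(c,c^{-1})$ are inverted by the swap, and Goursat together with $\mathrm{Out}(H)=1$ and the non-reality of a single element in each pair forces $\langle X_i,Y_i\rangle=H\times H$. That said, your argument still rests on an unverified computational input, namely that $H$ admits two generating pairs of \emph{coprime} types each containing a non-real element. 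You name candidate orders for $x_1,x_2$ but neither exhibit the full triples nor cite a source; this is exactly the content that the paper supplies by writing out explicit permutations, so in the end both proofs lean on a computation of comparable weight---yours just explains why the answer has the shape it does.

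For $A_5\times A_5$ there is a more substantial gap. The paper again gives explicit permutations (type $((5,6,5),(15,10,15))$), and here the inverting automorphism is \emph{not} the swap but conjugation by an element of $S_5\times S_5$ acting on each factor separately. Your proposal sketches a template---mixing types across the two coordinates so that Goursat forces generation---but this template says nothing about how to locate a simultaneous inverter, and you defer both that and condition~$(\dagger)$ to ``a short explicit computation'' which you do not carry out. Since, as you correctly observe, every class of $A_5$ is real and the swap trick is unavailable, no conceptual shortcut is on offer; the proof in this case genuinely \emph{is} the computation, and you have not supplied it.
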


\begin{proof}
This is a straightforward computational calculation. Consider the following permutations.\\

\begin{tabular}{rclcrcl}
$x_1$&:=&(1,2,3,4,5)(6,7,8,9,10)&&$y_1$&:=&(2,3,4)(7,10)(6,9),\\
$x_2$&:=&(1,4,3,2,5)(7,8,9)&&$y_2$&:=&(1,2)(4,5)(6,9,8,7,10) and\\
$a$&:=&(1,5)(2,4)(6,10)(7,9)&&&&
\end{tabular}\vspace{3mm}

The set $\{\{x_1,y_1\},\{x_2,y_2\}\}$ gives a Beauville structure for the
group A$_5\times$A$_5$ of type ((5,6,5),(15,10,15)) acting intransitively on 5+5 points as a subgroup of the symmetric group S$_{10}$. The
automorphism $\alpha$ defined by conjugation by $a$ has the property
that
\[
\alpha(x_1)=x_1^{-1}\mbox{, }\alpha(y_1)=y_1^{-1}\mbox{, }\alpha(x_2)=x_2^{-1}\mbox{ and }\alpha(y_2)=y_2^{-1}
\]
from which we have that this Beauville structure is strongly real.

Next, the group M$_{11}\times$M$_{11}$. Consider the
following permutations.\\

\begin{tabular}{rcl}
$x_1$&:=&(1,2,3,4,5,6,7,8,9,10,11)(12,13,14,15,16,17,18,19,20,21,22)\\
$y_1$&:=&(1,6,10,5,2,7,4,9,11,8,3)(12,14,19,16,21,18,13,17,22,20,15)\\
$x_2$&:=&(1,3,9,11,10,7,2,4)(5,8)(12,14,20,22,19,21,16,13)(15,18)\\
$y_2$&:=&(2,6,9,4,8,3,7,5)(10,11)(12,13)(14,17,21,18,16,20,15,19)\\
$a$&:=&(1,22)(2,21)(3,20)(4,19)(5,18)(6,17)(7,16)(8,15)(9,14)(10,13)(11,12)\\
\end{tabular}\vspace{3mm}

The set $\{\{x_1,y_1\},\{x_2,y_2\}\}$ gives a Beauville structure for the
group M$_{11}\times$M$_{11}$ of type ((11,11,11),(8,8,8)) acting intransitively on 11+11 points as a subgroup of the symmetric group S$_{22}$. The
automorphism $\alpha$ defined by conjugation by $a$ has the property
that
\[
\alpha(x_1)=x_1^{-1}\mbox{, }\alpha(y_1)=y_1^{-1}\mbox{, }\alpha(x_2)=x_2^{-1}\mbox{ and }\alpha(y_2)=y_2^{-1}
\]
from which we have that this Beauville structure is strongly real.

Finally for M$_{23}\times$M$_{23}$ we similarly have that the permutations\\

\begin{tabular}{rcl}
$x_1$&:=&(1,22,5,17,6,10,18,16,19,8,9,15,13,14,21,4,3,7,23,20,2,12,11)\\
&&(24,40,44,43,26,33,34,32,38,39,28,31,29,37,41,30,42,25,46,36,35,45,27)\\
$y_1$&:=&(1,16,3,14,7,15,18,22,21,8,20,10,4,17,19,13,5,6,23,9,2,12,11)\\
&&(24,41,42,34,28,30,43,37,27,39,26,25,29,32,40,33,44,31,46,36,35,45,38)\\
$x_2$&:=&(1,3,19,7,18,4,11,21,16,14,6)(2,23,9,17,15,20,22,10,13,12,8)\\
&&(24,45,39,35,34,37,25,27,32,30,38)(26,36,43,29,40,28,44,46,41,33,31)\\
$y_2$&:=&(1,6,22,9,16,17,5,19,11,18,2)(3,14,13,23,4,12,15,10,7,21,8)\\
&&(24,34,33,44,39,26,40,37,32,35,43)(25,41,46,45,29,36,28,42,30,31,38)\\
$a$&:=&(1,46)(2,45)(3,44)(4,43)(5,42)(6,41)(7,40)(8,39)(9,38)(10,37)(11,36)\\
&&(12,35)(13,34)(14,33)(15,32)(16,31)(17,30)(18,29)(19,28)(20,27)\\
&&(21,26)(22,25)(23,24)\\
\end{tabular}\vspace{3mm}

\noindent define a strongly real Beauville structure of type
((23,23,23),(11,11,11)) for the group M$_{23}\times$M$_{23}$ acting intransitively on 23+23 points as a subgroup of the symmetric group S$_{46}$.
\end{proof}

We remark that in the examples of the above lemma, the automorphisms
used are outer automorphisms that interchange the two factors. The
lack of automorphisms that stop both M$_{11}$ and M$_{23}$ being
strongly real will therefore also stop the groups
M$_{11}\times$M$_{11}\times$M$_{11}$ and
M$_{23}\times$M$_{23}\times$M$_{23}$ being strongly real Beauville
groups. Furthermore it is easy to see that the permutations given in
the proof of Lemma \ref{Mathieu} can be adapted to construct a
strongly real Beauville structure of type ((88,88,88),(88,88,88))
for the group M$_{11}\times$M$_{11}\times$M$_{11}\times$M$_{11}$ and
to construct a strongly real Beauville structure of type
((253,253,253),(253,253,253)) for the group
M$_{23}\times$M$_{23}\times$M$_{23}\times$M$_{23}$. Similarly A$_5\times$A$_5\times$A$_5$ is not a strongly real Beauville group. It follows that
any extension of Conjecture \ref{CharSimpConj} to products of a
larger number of copies of simple groups will necessarily have a
much more complicated statement. It is likely that similar remarks apply to M$_{11}^{2k+1}$, M$_{11}^{2k}$, M$_{23}^{2k+1}$, M$_{23}^{2k}$ A$_5^{2k+1}$ and A$_5^{2k}$ for small values of $k$.

In light of the above it is natural to ask the following.

\begin{question}
Let $H$ be a finite simple group, $n\in\mathbb{Z}^+$ and $G=H^n$. When are inner automorphism sufficient to make $G$ strongly real and when do outer automorphisms interchanging the factors required? Moreover does this have any geometric significance for the corresponding surfaces?
\end{question}

\section{The Symmetric and Alternating Groups}\label{alts}

In the last section we discussed characteristically simple groups of the form $H\times H$ for some simple group $H$. In this section we prove slightly stronger results in the case of the alternating groups and a related result for the symmetric groups. In each of the below results conjugacy of elements is taken care of by the well known fact that two elements of the symmetric group are conjugate if, and only if, they have the same cycle type. We will use the following recent results of Jones.

\begin{lemma}\label{AltGenLem}
Let $H\leq S_n$.
\begin{enumerate}
\item[(a)] If $H$ is primitive and contains a cycle that fixes at least three points then $H\geq A_n$.

\item[(b)] If $H$ is transitive and contains an $m$-cycle where $m>n/2$ and $m$ is coprime to $n$ then $H$ is primitive.

\item[(c)] If $H$ is primitive and contains a cycle fixing two points then either $H\geq A_n$ or $PGL_2(q)\leq H\leq P\Gamma L_2(q)$ with $n=q+1$ for some prime power $q$.
\end{enumerate}
\end{lemma}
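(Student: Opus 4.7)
The plan for proving Lemma~\ref{AltGenLem} splits into two naturally distinct parts: an elementary orbit-on-blocks argument for (b), and an appeal to a classification of primitive permutation groups containing a cycle for (a) and (c).

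For (b) I would argue by contradiction. Suppose $H$ preserves a non-trivial block system with blocks of size $d$, so that $d\mid n$ and $1<d<n$; since $\gcd(m,n)=1$ this forces $\gcd(m,d)=1$. Let $\sigma$ denote the $m$-cycle. If some block $B$ is not fixed setwise by $\sigma$, then $B$ contains no fixed point of $\sigma$: any such $x\in B$ would also lie in $\sigma(B)$, contradicting $B\cap\sigma(B)=\emptyset$. Hence $B\subseteq\mathrm{supp}(\sigma)$, and the $\sigma$-orbit of $B$ consists of disjoint blocks of size $d$ whose union is $\sigma$-invariant inside $\mathrm{supp}(\sigma)$; since the latter is a single $\sigma$-orbit, this union equals $\mathrm{supp}(\sigma)$, so $d\mid m$, which together with $\gcd(m,d)=1$ forces $d=1$. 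If instead every block is fixed setwise by $\sigma$, then the unique non-trivial $\sigma$-orbit $\mathrm{supp}(\sigma)$ must lie inside a single block, forcing $d\geq m>n/2$, which is impossible for a proper divisor of $n$. Either way we reach a contradiction, so $H$ is primitive.

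For (a) and (c) I would invoke the classification of primitive permutation groups of degree $n$ containing a cycle with at least one fixed point. This classification asserts that such a group either contains $A_n$, or belongs to one of a short list of explicit families in which the number of fixed points of the cycle is severely constrained: essentially, the only non-alternating families are $A\Gamma L_1(p)$-type groups of prime degree (cycle fixing one point) and groups sandwiched between $PGL_2(q)$ and $P\Gamma L_2(q)$ acting on $n=q+1$ points (cycle fixing one or two points), together with a finite list of small exceptions. For (a), the hypothesis that $\sigma$ fixes at least three points immediately eliminates every line of the classification apart from $A_n$ or $S_n$, once the exceptional list has been inspected. For (c), the hypothesis that $\sigma$ fixes exactly two points similarly leaves $A_n\leq H\leq S_n$ and the $PGL_2(q)$-type sandwich, the latter realised by the $(q-1)$-cycle coming from a split torus in $PGL_2(q)$, which fixes precisely the two eigenlines on the projective line.

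The main obstacle is the classification itself, whose proof relies on the classification of finite simple groups together with a careful analysis of the maximal subgroups of the almost simple groups in order to rule out long cycles outside the listed families. Once that classification is taken as input, the deductions for (a) and (c) reduce to routine case checks against a short list, while (b) is wholly elementary.
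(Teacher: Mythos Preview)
Your proposal is correct and matches the paper's approach: the paper's own proof is simply the citation ``See \cite{j} and \cite[Section 6]{Jones1}'', and what you have written is precisely an unpacking of those references --- the elementary block argument for (b) is the standard one appearing in \cite{Jones1}, and (a) and (c) are read off from Jones's CFSG-dependent classification in \cite{j} exactly as you describe. There is nothing to add.
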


\begin{proof}
See \cite{j} and \cite[Section 6]{Jones1}.
\end{proof}

Before proving our main results we recall some facts about generating pairs in simple and characteristically simple groups. Let $H$ be a finite simple group. In \cite{Hall} Philip Hall showed that the largest $k$ such that the characteristically simple group $H^k$ is 2-generated is equal to the number of orbits of $Aut(H)$ on generating pairs of $H$. (He proved similar results for more general $n$-tuples but we will not be needing these results here.) To show that $H^k$ for some $k$ is generated by a pair of elements it is sufficient to show that each of the `coordinates' of these elements (that is, the parts of each permutation that correspond to each of the factors) are inequivalent under the action of $Aut(H)$. For $n\not=1,2,6$ we have that $Aut(A_n)=S_n$ and in the case $n=6$ the symmetric group S$_6$ is an index 2 subgroup of $Aut(\mbox{A}_6)\cong \mbox{P}\Gamma\mbox{L}_2(9)$.

\begin{lemma}
Let $n\geq11$ be odd and let $k\leq(n-6)/2$ be positive integers. Then $A_n^k$ is a strongly real Beauville group.
\end{lemma}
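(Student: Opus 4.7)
The plan is to invoke the Philip Hall/Jones framework laid out in the paragraph just before the lemma: since $n \geq 11$ we have $n \neq 6$, so $\mbox{Aut}(A_n) = S_n$, and it suffices to exhibit, for each of the two halves of a Beauville structure, a list of $k$ generating pairs of $A_n$ lying in pairwise distinct $S_n$-orbits. The coordinatewise assembled elements $X_j = (x_j^{(1)}, \ldots, x_j^{(k)})$ and $Y_j = (y_j^{(1)}, \ldots, y_j^{(k)})$ for $j = 1, 2$ will then generate $A_n^k$. One must further arrange the two lists so that (i) the orders arising in pair 1 are coprime to those arising in pair 2, securing $(\dagger)$ for free, and (ii) a single automorphism of $A_n^k$ simultaneously inverts all four generators, securing strong reality.

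First, I would construct a coprime strongly real Beauville structure for $A_n$ itself, of a type that admits a one-parameter family of variations. Since $n$ is odd, a natural choice is $x_1 = (1, 2, \ldots, n)$, an $n$-cycle, together with a short ``symmetric'' $y_1$ chosen so that $x_1 y_1$ is a short cycle (for example a $3$-cycle). Generation of $A_n$ follows from Lemma \ref{AltGenLem}: transitivity together with the $n$-cycle forces primitivity via part (b), and the short cycle fixing at least three points then upgrades primitivity to containment of $A_n$ via part (a). For the second pair one uses elements whose orders fall in a different, coprime regime, for instance two disjoint cycles of coprime odd lengths summing to $n$, in the spirit of standard Beauville constructions for alternating groups. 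The natural inverting element for the base structure is the reflection $\sigma = (1,n)(2,n-1)\cdots$, which fixes the midpoint $(n+1)/2$ and inverts any permutation that is symmetric about it; $\sigma \in S_n = \mbox{Aut}(A_n)$ regardless of whether it is even or odd.

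Second, I would deform each base pair into $k$ variants having the same cycle type (so that all $x_j^{(i)}$ share an order, and likewise for $y_j^{(i)}$ and $x_j^{(i)} y_j^{(i)}$), still simultaneously inverted by the same $\sigma$, but lying in pairwise distinct $S_n$-orbits. Concretely one keeps the $n$-cycle $x_j$ fixed and shifts the short cycles inside $y_j$ across the symmetric slots $\{i, n+1-i\}$ in a central band of available positions. Two pairs $(x,y)$ and $(x,y')$ sharing the same $x$ lie in distinct $S_n$-orbits iff $y'$ is not $\langle x\rangle$-conjugate to $y$, and the $S_n$-centraliser of the $n$-cycle is precisely $\langle x\rangle$. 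This is where the numerical hypothesis $k \leq (n-6)/2$ enters: once one reserves a ``skeleton'' of roughly six symbols to pin down the cycle structure of $xy$, the remaining symmetric pairs of slots number $(n-6)/2$, giving exactly that many inequivalent $\sigma$-symmetric placements.

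Third, assemble $X_j$ and $Y_j$ as above. By Hall's theorem $\langle X_j, Y_j \rangle = A_n^k$ for $j = 1,2$; coprimality of the two order profiles yields $(\dagger)$ componentwise, since a common nontrivial power in $\Sigma(X_1,Y_1)\cap\Sigma(X_2,Y_2)$ would force each coordinate to be a common power of coprime orders; and the automorphism $\Phi$ of $A_n^k$ acting as conjugation by $\sigma$ in every coordinate simultaneously inverts $X_1, Y_1, X_2, Y_2$, giving strong reality. The principal obstacle is step two: producing $k$ perturbations that are simultaneously of the same cycle type, symmetric under $\sigma$, and pairwise inequivalent modulo $\langle x\rangle$-conjugation. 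Verifying that the chosen symmetric placements lie in distinct double cosets $\langle x \rangle \backslash S_n / \langle x \rangle$ is the most delicate combinatorial part of the argument, and this is precisely what pins the constant down to $(n-6)/2$.
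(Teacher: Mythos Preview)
Your overall architecture is right and matches the paper: invoke Hall's criterion, produce for each side of the structure $k$ generating pairs of $A_n$ lying in pairwise distinct $S_n$-orbits, and arrange that a single outer involution of $A_n$ (applied coordinatewise) inverts everything. Where you diverge from the paper is in the concrete choice of pairs, and this matters.

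For the first family the paper does \emph{not} fix a cycle type and vary within it. It takes $x_{1j}=(1,\ldots,2j+3)$ and $y_{1j}=(2j+3,\ldots,n)$, two cycles of \emph{varying} odd lengths $2j+3$ and $n-2j-2$ whose product is an $n$-cycle. Distinct $j$ give distinct cycle types, so the pairs lie in distinct $S_n$-orbits for free, and the centraliser/double-coset argument you flag as ``the most delicate combinatorial part'' is never needed. Your plan to keep the $n$-cycle fixed and slide a short symmetric $y$ through $\sigma$-invariant slots is plausible, but you have not carried out the inequivalence check, and that is exactly the labour the paper's choice avoids.

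For the second family the paper is closer to your idea: $x_{2j}=(1,\ldots,n-2)$ is held fixed and $y_{2j}$ is a symmetrically placed product of four transpositions depending on $j$, with $x_{2j}y_{2j}$ an $(n-2)$-cycle; generation uses parts (b) and (c) of Lemma~\ref{AltGenLem}. The inverting involution here, like the one for the first family, fixes exactly one point, so both represent the same outer class of $A_n$, which is what makes the coordinatewise automorphism of $A_n^k$ work.

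Finally, the paper does not secure $(\dagger)$ by coprimality --- its second family contains elements of order $2$ and $n-2$, which need not be coprime to the odd orders appearing in the first family. The verification instead goes via cycle structure: nontrivial elements of $\Sigma(x_{1j},y_{1j})$ are powers of single cycles supported on $2j+3$, $n-2j-2$ or $n$ points, whereas nontrivial elements of $\Sigma(x_{2j},y_{2j})$ are supported on $n-2$ or $8$ points, and in the stated range of $j$ these support sizes never coincide. So your framework is sound, but the execution is only sketched, and the step you correctly identify as hardest is precisely the one the paper's construction sidesteps.
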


\begin{proof}
Since $A_n$ is simple for every $n>5$ we have that, by the remarks of the previous paragraph, it is sufficient to find $k$ pairs of generating pairs $T_{ij}=\{x_{ij},y_{ij}\}$ $i=1,2$, $j=1,\ldots,k$ such that for a fixed $i$ no $T_{ij}$ is an image of $T_{ij'}$ under the actions of automorphisms of $A_n$ for distinct $j$ and $j'$.

For our first pairs we set
$$x_{1j}=(1,\ldots,2j+3)\mbox{ and }y_{1j}=(2j+3,\ldots,n)$$
for $1\leq j<(n-6)/4$. These are cycles of odd length and are thus even permutations. Their product is an $n$-cycle. It is easy to check that $\langle x_{1j},y_{1j}\rangle$ is primitive and thus equal to $A_n$ since the group contains a cycle with at least three fixed points by Lemma \ref{AltGenLem}(a). These elements are both inverted by the automorphism defined by conjugation by
$$t=(1,2j)(2,2j-1)\cdots(j,j+1)(2j+2,n)(2j+3,n-1)\cdots((2j+n+1)/2,(2j+n+3)/2)$$
which has only one fixed point, namely $2j+1$.

For our second pairs we consider the permutations
$$x_{2j}=(1,\ldots,n-2)\mbox{ and }y_{2j}=(j+1,j+2)(n-j-1,n-j-2)((n-1)/2,n-1)((n+1)/2,n)$$
for $1\leq j<(n-5)/2$. These are again both even permutations, their product this time being an $(n-2)$-cycle. To confirm that these elements generate the group we note that $\langle x_{2j},y_{2j}\rangle$ is clearly transitive and so by Lemma \ref{AltGenLem}(b) must be primitive. It now follows from Lemma \ref{AltGenLem}(c) that $\langle x_{2j},y_{2j}\rangle=A_n$ since $n>9$ and the only elements of order 2 in $P\Gamma L_2(q)$ do not have the same cycle type as $y_{2j}$. (In the case $n=9$ these permutations generate $PSL_2(8)$.) These elements are both inverted by the automorphism defined by conjugation by
$$(2,n-2)(3,n-3)\cdots((n-1)/2,(n+1)/2)(n-1,n)$$
which has only one fixed point, namely 1, and thus differs from $t$ solely by an inner automorphism.
\end{proof}

\begin{lemma}
Let $n\geq12$ be an even integer and let $k\leq(n-8)/4$. Then $A_n^k$ is a strongly real Beauville group.
\end{lemma}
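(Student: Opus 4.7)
The plan is to parallel the proof of the previous lemma, modifying the construction so that all generators remain even permutations despite $n$ being even. By Hall's theorem cited just before the preceding lemma, it suffices to exhibit, for each $j=1,\ldots,k$, a Beauville structure $\{\{x_{1j},y_{1j}\},\{x_{2j},y_{2j}\}\}$ for $A_n$ such that for each fixed $i\in\{1,2\}$ the pairs $(x_{ij},y_{ij})$ lie in pairwise distinct orbits of $\mathrm{Aut}(A_n)=S_n$ (recall $n\geq 12>6$, so $\mathrm{Aut}(A_n)=S_n$), with a common inversion automorphism for each pair (the two inversions differing only by an inner automorphism).

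For the first pair I would take $x_{1j}=(1,2,\ldots,2j+3)$, a cycle of odd length $2j+3$ and therefore an even permutation. The odd-case choice $y_{1j}=(2j+3,\ldots,n)$ is no longer in $A_n$ because $n-2j-2$ is even, so I would instead pick a cycle of odd length, for example $y_{1j}=(2j+2,2j+3,\ldots,n)$, of length $n-2j-1$ (odd since $n$ is even). The two cycles overlap on $\{2j+2,2j+3\}$, so $\langle x_{1j},y_{1j}\rangle$ is transitive; primitivity follows from Lemma \ref{AltGenLem}(b) once $n-2j-1>n/2$ and $\gcd(n-2j-1,n)=1$ are verified, and Lemma \ref{AltGenLem}(a) then forces $\langle x_{1j},y_{1j}\rangle=A_n$ since $x_{1j}$ has at least three fixed points (as $2j+3\leq n-3$). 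Different values of $j$ give $x_{1j}$ of distinct cycle length, placing the $(x_{1j},y_{1j})$ in distinct $S_n$-orbits.

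For the second pair I would adapt the corresponding construction from the previous lemma similarly: replace the $(n-2)$-cycle (which is odd for even $n$) by the $(n-1)$-cycle $x_{2j}=(1,2,\ldots,n-1)$, and replace $y_{2j}$ by a product of an even number of transpositions whose supports are chosen so that $\langle x_{2j},y_{2j}\rangle$ acts transitively on $\{1,\ldots,n\}$ and so that the conjugacy class of $x_{2j}y_{2j}$ varies with $j$. Lemma \ref{AltGenLem}(b),(c) then gives $\langle x_{2j},y_{2j}\rangle=A_n$, with $n\geq 12$ ruling out the exceptional $PGL_2(q)\leq H\leq P\Gamma L_2(q)$ case (the cycle structure of $y_{2j}$ can be arranged not to match any involution in $P\Gamma L_2(q)$). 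The inversion involutions are constructed exactly as in the odd case by reversing each cycle consistently on the overlap, and the two involutions for the first and second pair differ only by an inner automorphism of $A_n$.

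The main obstacle will be juggling three competing constraints simultaneously — generation of $A_n$ via Lemma \ref{AltGenLem}, pairwise $S_n$-inequivalence of the $k$ pairs, and a single common inversion per pair — subject to the parity requirement that $n$ is even. This is what degrades the bound from $(n-6)/2$ in the odd case to $(n-8)/4$ here: once the cycle lengths in play are forced to be odd, fewer length combinations remain available to produce distinct $S_n$-orbits while still preserving enough fixed points for the hypothesis of Lemma \ref{AltGenLem}(a) and enough room to define the required inversion involutions on disjoint regions.
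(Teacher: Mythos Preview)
Your overall strategy---mirror the odd case, adjust cycle lengths to keep everything in $A_n$, and verify inequivalence of the $k$ generating pairs under $\mathrm{Aut}(A_n)=S_n$---is the right one and is what the paper does. But two of your concrete choices break down.

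\textbf{First pairs.} With $y_{1j}=(2j+2,\ldots,n)$ of length $n-2j-1$, your appeal to Lemma~\ref{AltGenLem}(b) requires $\gcd(n-2j-1,n)=\gcd(n,2j+1)=1$, and this simply fails in general: for $n=12$, $j=1$ you get $\gcd(12,3)=3$. So the stated primitivity argument is invalid. (A direct block argument using the $(2j+3)$-cycle $x_{1j}$ does rescue primitivity, but that is not what you wrote.) The paper sidesteps this issue by shifting the overlap: it takes $x_{1j}=(1,\ldots,2j+5)$ and $y_{1j}=(n,n-1,\ldots,2j+5,2j+4)$, arranged so that the product $x_{1j}y_{1j}$ is an $(n-1)$-cycle. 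An $(n-1)$-cycle in a transitive group of degree $n$ forces $2$-transitivity, hence primitivity, and then Lemma~\ref{AltGenLem}(a) applies cleanly. Your forward-oriented $y_{1j}$ gives a product that splits as a $(2j+2)$-cycle times an $(n-2j-2)$-cycle, which is much less useful.

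\textbf{Second pairs.} Here your sketch is too vague to constitute a proof: ``a product of an even number of transpositions whose supports are chosen so that\ldots'' does not specify enough to verify generation, the Beauville intersection condition, or the existence of a common inverting involution with the correct parity. The paper's construction is quite different from what you outline and more delicate: it takes $x_{2j}=(1,\ldots,n-2)(n-1,n)$ (an $(n-2)$-cycle times a transposition, hence even) and $y_{2j}$ equal to the product of a specific $3$-cycle and a specific $5$-cycle depending on $j$. The product $x_{2j}y_{2j}$ is then an $(n-3)$-cycle fixing three named points; one shows $2$-transitivity by exhibiting a point stabiliser that is transitive on the remaining $n-1$ points, and then Lemma~\ref{AltGenLem}(a) applies via the $3$-cycle $y_{2j}^{5}$. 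The inverting involution here fixes exactly the two points $n-1,n$, matching (up to inner automorphism) the two-fixed-point involution used for the first pairs. Your proposed $x_{2j}=(1,\ldots,n-1)$ route via Lemma~\ref{AltGenLem}(c) would also require you to control the cycle structure of $y_{2j}$ precisely enough to exclude the $P\Gamma L_2(q)$ case, which you have not done.

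Finally, you do not explain how the bound $k\le (n-8)/4$ emerges from your construction; in the paper it comes from the range $1\le j\le (n-8)/4$ needed to keep the first-pair cycle lengths in the right regime.
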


\begin{proof}
Again, we seek a collection of generating pairs that are not mapped to one another by automorphisms of $A_n$.

For our first pairs we consider the following elements.
$$x_{1j}=(1,\ldots,2j+5)\mbox{ and }y_{1j}=(n,\ldots,2j+5,2j+4)$$
for $1\leq j\leq(n-8)/4$. These are cycles of odd length and are thus even permutations. Their product is an $(n-1)$-cycle. It is easy to check that $\langle x_{1j},y_{1j}\rangle$ is primitive and thus equal to $A_n$ since the group contains a cycle with at least three fixed points by Lemma \ref{AltGenLem}(a). These elements are both inverted by the automorphism defined by conjugation by
$$t=(2j+4,2j+5)(1,2j+3)\cdots(j+1,j+3)(2j+6,n)\cdots((n+2j+4)/2-1,(n+2j+4)/2+1)$$
which has precisely two fixed points namely $j+2$ and $(n+2j+4)/2$.

For our second pair of generators we consider the permutations
$$x_{2j}=(1,\ldots,n-2)(n-1,n)\mbox{ and}$$
$$y_{2j}=(n/2,(n-2)/2,n-1)(j+1,j,n,n-j-1,n-j-2)$$
for $1\leq j<(n-2)/2$. These are both even permutations and their product is an $(n-3)$-cycle that fixes the points $j$, $n-j-2$ and $(n-2)/2$. The subgroup $\langle x_{2j}y_{2j},y_{2j}^3\rangle$ fixes the point $(n-2)/2$ and is transitive on the remaining $n-1$ points. It follows that the group $\langle x_{2j},y_{2j}\rangle$ is 2-transitive and is therefore primitive. Since the group $\langle x_{2j},y_{2j}\rangle$ also contains the 3-cycle $y_{2j}^5$ (and the 5-cycle $y_{2j}^3$) it is equal to $A_n$ by Lemma \ref{AltGenLem}(a). These elements are both inverted by the automorphism defined by conjugation by
$$(1,n-2)(2,n-3)\cdots((n-2)/2,n/2)$$
which has precisely two fixed points, namely $n-1$ and $n$ and thus differs from $t$ solely in an inner automorphism.
\end{proof}

When considering the alternating groups it is natural to seek similar results for the symmetric groups. Unfortunately, here we are somewhat limited: if $k>2$ then $S_n^k$ is not 2-generated since its abelianisation is $\mathbb{Z}_2^k$ and this is not 2-generated. It follows that we can only find analogous results for $k\leq2$ and since $k=1$ comes straight from the work of Fuertes and Gonz\'{a}lez-Diez we are left only to consider the case $k=2$. Note that in this case the outer automorphism of $S_n\times S_n$ that interchanges the two factors is useless since the only permutations that are inverted by this automorphism are even.

\begin{lemma}
For $n\geq5$ the group $S_n\times S_n$ is strongly real.
\end{lemma}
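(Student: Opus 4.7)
The strategy is to lift the Fuertes--Gonz\'alez-Diez strongly real Beauville structure $\{\{a_1,b_1\},\{a_2,b_2\}\}$ of $S_n$ (which exists for $n\geq 5$) to $S_n\times S_n$ by using a coordinate swap \emph{within} the generators rather than as an outer automorphism of the product. Since $\mathrm{Out}(S_n)$ is trivial for $n\neq 6$ (and of order $2$ for $n=6$), and since the factor-swap automorphism of $S_n\times S_n$ was just shown to be useless, we must arrange for the inverters to live in $S_n\times S_n$ itself.

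Concretely, set
\[
X_i:=(a_i,b_i),\qquad Y_i:=(b_i,a_i)\qquad(i=1,2).
\]
By Goursat's lemma, $\langle X_i,Y_i\rangle=S_n\times S_n$ unless some $\varphi\in\mathrm{Aut}(S_n)$ sends $(a_i,b_i)$ to $(b_i,a_i)$; for $n\neq 6$ this would require $g\in S_n$ with $ga_ig^{-1}=b_i$, forcing $a_i$ and $b_i$ to have the same cycle type. The FGD construction produces $a_i$ and $b_i$ of distinct cycle types (typically an involution paired with a long cycle), ruling this out; the case $n=6$ requires a brief extra check involving the exceptional outer automorphism of $S_6$.

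Condition $(\dagger)$ follows by a projection argument: any conjugate of a power of $X_i$, $Y_i$ or $X_iY_i$ has its first coordinate lying in $\Sigma(a_i,b_i)$ and its second coordinate also in $\Sigma(a_i,b_i)$, so $\Sigma(X_i,Y_i)\subseteq\Sigma(a_i,b_i)\times\Sigma(a_i,b_i)$. Hence $\Sigma(X_1,Y_1)\cap\Sigma(X_2,Y_2)=\{(e,e)\}$ is immediate from the corresponding condition on $S_n$. For strong realness, let $s_i\in S_n$ be an element that conjugation-inverts both $a_i$ and $b_i$ simultaneously (provided by the strongly real structure of $S_n$); then $(s_i,s_i)\in S_n\times S_n$ inverts both $X_i$ and $Y_i$ by conjugation, and we may take $\phi=\mathrm{id}$ and $g_i=(s_i,s_i)$ in the definition of strongly real.

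The main obstacle will be to verify, uniformly in $n\geq 5$, that the specific FGD generators have $a_i$ and $b_i$ of distinct cycle types so that the Goursat obstruction vanishes --- this reduces to a cycle-type check on the particular permutations used by Fuertes and Gonz\'alez-Diez --- and to handle the exceptional case $n=6$ separately, where the extra outer automorphism of $S_6$ enlarges the orbit space of generating pairs that need to be distinguished.
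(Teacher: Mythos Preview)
Your strategy is genuinely different from the paper's (which builds explicit permutations on $2n$ points, splitting into cases according to the parity of $n$ and handling $n=5,6$ by hand), and the projection argument for condition $(\dagger)$ and the inversion by $(s_i,s_i)$ are both fine. However, there is a real gap in the generation step.

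Goursat's lemma for $S_n\times S_n$ does \emph{not} reduce to the twisted-diagonal obstruction alone, because $S_n$ is not simple. A subgroup $H\le S_n\times S_n$ surjecting onto both factors has $H\cap(S_n\times 1)$ and $H\cap(1\times S_n)$ normal in $S_n$, and the possibility $N_1=N_2=A_n$ yields the index-$2$ ``same parity'' subgroup
\[
K=\{(g,h)\in S_n\times S_n:\ \mathrm{sgn}(g)=\mathrm{sgn}(h)\}.
\]
If both $a_i$ and $b_i$ are odd permutations, then $X_i=(a_i,b_i)$ and $Y_i=(b_i,a_i)$ both lie in $K$, so $\langle X_i,Y_i\rangle\le K\neq S_n\times S_n$. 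Your stated obstruction --- that $a_i$ and $b_i$ have distinct cycle types --- does not exclude this: for instance $(1,2)$ and $(1,2)(3,4)(5,6)$ have distinct cycle types but are both odd. What you actually need is that, for each $i$, exactly one of $a_i,b_i$ is odd; this simultaneously kills the index-$2$ obstruction (the images of $X_i,Y_i$ in $\mathbb{Z}_2\times\mathbb{Z}_2$ then generate) and the diagonal obstruction (different parity forces different cycle type, hence no automorphism of $S_n$ swaps them for $n\neq 6$). Since $\langle a_i,b_i\rangle=S_n$ already forces at least one of them to be odd, the missing verification is that not both are odd in the Fuertes--Gonz\'alez-Diez pairs --- a condition you have not checked and which is not automatic for an arbitrary strongly real Beauville structure on $S_n$.
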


\begin{proof}
We will explicitly construct our Beauville structure in the case of $n$ even and then describe the differences in the case of $n$ odd.

For our first pair we consider the following elements.
$$x_1=(1,\ldots,n-1)(2n-1,2n)\mbox{ and }y_1=(n,n-1)(n+1,\ldots,2n-1)$$
The product of these permutations is a pair of $n$-cycles. It is easy to check that $\{x_1^2,y_1^{n-1}\}$ generates the first of the two factors whilst $\{x_1^{n-1},y_1^2\}$ generates the second and so $\langle x_1,y_1\rangle$ is the whole group. These elements are both inverted by the automorphism defined by conjugation by
$$t=(1,n-2)(2,n-3)\cdots(n/2,n/2-1)(n+1,2n-2)\cdots(3n/2,3n/2-1)$$
which has precisely four fixed points, namely $n-1$, $n$, $2n-1$ and $2n$.

For our second pair of generators we consider the permutations
$$x_2=(1,2,3)(4,\ldots,n)(n+4,n+3,n+2,n+1)$$
$$\mbox{ and }y_2=(4,3,2,1)(n+1,n+2,n+3)(n+4,\ldots,2n)$$
The product of these permutations is a pair of $n-2$ cycles. It is easy to check that $\{x_1^4,y_1^{3(n-3)}\}$ generates the first factor (this group is easily seen to be 2-transitive, and thus primitive, by considering conjugates of $y_1^{3(n-1)}$ under powers of $x_1^4$ and since this group contains a 4-cycle it contains the whole of $S_n$ by \cite[Corollary 1.3]{j}). Similarly $\{x_1^{3(n-1)},y_1^4\}$ generates the second factor. These elements are both inverted by the automorphism defined by conjugation by
$$(1,3)(5,n)\cdots(n/2+2,n/2+3)(n+1,n+3)(n+5,2n)\cdots(3n/2+2,n/2+3)$$
which has precisely four fixed points, namely $2$, $4$, $n+2$ and $n+4$ and thus differs from $t$ solely in an inner automorphism. (The case $n=6$ requires a little care --- using
$$x_2=(1,2,3,4)(10,11,12)\mbox{, }y_2=(4,5,6)(7,8,9,10),$$ and the automorphism defined by $(1,3)(5,6)(7,9)(11,12)$ avoids being trapped inside copies of $S_5$ acting transitively on six points.)

If $n$ is odd then for the first pair we need only replace the $(n-1)$-cycles with $n$-cycles to ensure that the elements have odd parity and for the automorphism instead use
$$t=(n,n-1)(n-1,1)\cdots((n-1)/2-1,(n-1)/2+1)\cdots$$
$$(2n,2n-1)(2n-2,n+1)\cdots((3n-1)/2-1,(3n-1)/2+1).$$
For the second generating pair we must now replace the $4$-cycles with some longer $p$-cycle whose length is coprime to $3(n-3)$ (if 5 fails then $n\geq2\times5+3$ and we can try 7; if both 5 and 7 fail then $n\geq2\times5\times7+3$ and we can try 11 etc.). The permutation of order 2 for the automorphism needs to be adjusted in the obvious manner (i.e. $(1,3)(4,p)(5,p-1)\cdots(p+1,n)(p+2,n-1)\cdots$). Again, the smallest case needs separate attention but it is easily checked that if
$$x_1=(1,4)(2,5)(6,10)(7,8,9)\mbox{ and }y_1=(1,5)(2,3,4)(6,9)(7,10)$$
and
$$x_2=(1,2,3,4,5)(6,7,9,10)\mbox{ and }y_2=(5,4,2,1)(10,9,8,7,6)$$
then $\{\{x_1,y_1\},\{x_2,y_2\}\}$ is a strongly real Beauville structure whose elements are inverted by conjugation by the element $(1,5)(2,4)(6,10)(7,9).$
\end{proof}

\section{Almost Simple Groups}\label{AS}

Let $G$ be a group. Recall that we say $G$ is almost simple if there exists a simple group $S$ such that $S\leq G\leq\mbox{Aut}(S)$. For example, any simple group is almost simple, as are the symmetric groups. Given our earlier remarks on the finite simple groups it is natural to ask the following.

\begin{question}
Which of the almost simple groups are strongly real?
\end{question}

This is particularly pertinent in light of Fuertes and Gonz\'{a}lez-Diez proof that the symmetric groups $S_n$ for $n>5$ are strongly real. Unfortunately, the general picture here is much more complicated with many almost simple groups not even being Beauville groups, let alone strongly real Beauville groups. Worse, infinitely many of the almost simple groups are not even 2-generated: the smallest example is PSL$_4(9)$ whose outer automorphism group is $\mathbb{Z}_2\times\mbox{D}_8$ (and more generally, if $p$ is an odd prime and $r$ is an even positive integer then $Aut(PSL_4(p^r))$ is not 2-generated). We can at least add the following to the list.

\begin{theorem}\label{spor}
The non-simple almost simple sporadic groups are strongly real Beauville groups.
\end{theorem}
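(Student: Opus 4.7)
The plan is to proceed in the same spirit as the author's earlier treatment of the sporadic simple groups in \cite{FairbairnExceptional}. The non-simple almost simple sporadic groups are precisely the groups $G = S.2 = \mbox{Aut}(S)$ as $S$ ranges over the sporadic simple groups with $|\mbox{Out}(S)|=2$ --- namely M$_{12}$, M$_{22}$, J$_2$, J$_3$, HS, McL, He, Suz, HN, O'N, Fi$_{22}$ and Fi$_{24}'$ --- together with the extension $^2\mbox{F}_4(2)$ of the Tits group. There are thus only finitely many (around a dozen) groups to treat.

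For each such $G$, the strategy is to write down, as explicit words in the standard generators of $G$ (in the sense of \cite{Wilson}), two generating pairs $\{x_1,y_1\}$ and $\{x_2,y_2\}$ of $G$ together with elements $g_1,g_2\in G$ enjoying the following three properties. First, to ensure that the pairs generate the whole of $G$ rather than only the simple subgroup $S$, at least one element of each pair is taken from the outer coset $G\setminus S$. Second, the orders are chosen so that $o(x_1)o(y_1)o(x_1y_1)$ and $o(x_2)o(y_2)o(x_2y_2)$ are coprime; this automatically secures condition $(\dagger)$ and in fact produces a coprime Beauville structure. Third, each $g_i$ simultaneously inverts $x_i$ and $y_i$ by conjugation.

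The third condition is where the almost simple setup genuinely helps: any outer automorphism of $S$ used in \cite{FairbairnExceptional} to invert a generating pair of $S$ is now realised as an \emph{inner} automorphism of $G$, so one typically needs no automorphisms of $G$ beyond inner conjugations. In this sense the construction for $G$ is slightly easier than the corresponding construction for $S$ itself, and in particular one can in many cases recycle the pairs from \cite{FairbairnExceptional} by multiplying one member of each pair by a suitable element of the outer coset. Verification for each individual group reduces to a finite computation in GAP \cite{GAP} or Magma \cite{Magma}. The main obstacle is not mathematical but logistical: locating, for each of the groups on the list, a concrete choice of words in standard generators which simultaneously meets all three constraints. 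For the larger cases such as Fi$_{24}'.2$, HN.2 and Suz.2 one may have to exploit Lemma-style knowledge of the maximal subgroups (as in the proof of Theorem \ref{Suz}) to certify that a given pair really does generate $G$ and to control the orders of the relevant products, but all of this remains well within the range of current computer algebra systems.
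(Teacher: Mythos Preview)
Your overall plan --- explicit words in standard generators, case-by-case verification --- matches the paper's, but the specific mechanism you propose has a genuine obstruction. You ask that $o(x_1)o(y_1)o(x_1y_1)$ and $o(x_2)o(y_2)o(x_2y_2)$ be coprime. This is impossible for $G=S.2$. Since $G/S\cong\mathbb{Z}_2$, any element of the outer coset has even order; and if $\langle x,y\rangle=G$ then at least one of $x,y$ lies in the outer coset, whence so does exactly one of $y,xy$ --- so at least two of $o(x),o(y),o(xy)$ are even. Thus both triples have even product and coprimality never holds. The paper makes exactly this point, and verifies condition~$(\dagger)$ directly rather than via coprimality; the types listed (e.g.\ $((4,4,5),(6,6,3))$ for $\mathrm{M}_{12}{:}2$) visibly share the prime~$2$.

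A second issue: you include $^2\mathrm{F}_4(2)$ among the groups to be handled, but the paper reports that straightforward computation shows $^2\mathrm{F}_4(2)$ is \emph{not} a strongly real Beauville group. The theorem is meant to cover only the twelve genuine sporadic automorphism groups $\mathrm{M}_{12}{:}2,\ldots,\mathrm{Fi}_{24}$, which are split extensions; the product-of-involutions construction the paper uses requires the split structure. Concretely, the paper takes involutions $t\in S$ and $t'\in G\setminus S$, sets $x_i=t\,t'^{g_i}$, and then $y_i=(x_i^{j(i)})^{u(i)}$ with $u(i)\in C_G(t)$; conjugation by the single involution $t$ then automatically inverts all of $x_1,y_1,x_2,y_2$. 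This is how the strongly real condition is engineered, rather than by recycling the pairs from \cite{FairbairnExceptional} as you suggest.
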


Before proceeding to the proof of Theorem \ref{spor} we make the following remarks for those who are unfamiliar with standard generators of finite groups (those who are familiar with them may skip to the proof in the next paragraph). Any given group will have many generating sets and in particular if $x,y\in G$ are such that $\langle x,y\rangle=G$ then $\langle x^g,y^g\rangle=G$ for any $g\in G$. To provide some standardisation to computational group theory, Wilson \cite{Wilson} introduced the notion of `standard generators'. These are generators for a group that are unusually easy to find and are specified in terms of which conjugacy classes that they belong to (which can often be determined solely from their orders) and which classes some word(s) in these elements belong to. Representatives for many of the finite simple groups and various other groups closely related to them may be found in explicit permutations and/or matrices for many of their most useful representations on the web-based Atlas of Group Representations \cite{elecATLAS}.

To construct our Beauville structures that prove Theorem \ref{spor} we proceed as follows. We first recall some well-known facts about the sporadic simple groups. If $G$ is one of the 27 sporadic simple groups (including the Tits group $^2$F$_4(2)'$) then the outer automorphism group of $G$ has order at most 2 and that in all cases in which there exists a non-trivial outer automorphism $Aut(G)$ is a non-split extension, apart from the Tits group $^2$F$_4(2)'$ and may thus be written $G:2$ in {\sc ATLAS} notation (see Section 1). Let $G$ be a simple group such that Aut$(G)=G:2$. Let $t,t'\in G:2$ have order 2 such that one of the elements lies in $G$ and the other lies in $G:2\setminus G$. For $i=1,2$ we define the elements $x_i=tt'^{g_i}$ for some $g_i\in G:2$. If for $i=1,2$ $u(i)\in C_G(t)$, then we can further define the elements $y_i=(x_i^{j(i)})^{u(i)}$ for some positive integers $j(i)$. Note that since $u(i)$ commutes with $t$ the automorphism defined by conjugation by $t$ inverts both $x_i$ and $y_i$. Using knowledge of the subgroup structure of $G:2$ it is often possible to choose the elements $g_1$, $g_2$, $u(1)$ and $u(2)$ in such a way that $\langle x_1,y_1\rangle=\langle x_2,y_2\rangle=G:2$. Unfortunately, in the case of almost simple groups we must have that the orders of $x_i$ and $y_i$ all have even order and so verifying the conjugacy condition ($\dagger$) of Definition \ref{DagDef} is more difficult than simply showing that $o(x_1)o(y_1)o(x_1y_1)$ is coprime to $o(x_2)o(y_2)o(x_2y_2)$. For some of the larger groups verifying that they generate the whole group can also be difficult. In these cases, generation is verified by finding words in our elements with the property that no proper subgroup can contain them (in many cases the maximal subgroups for these groups may be found in \cite{ATLAS}). The words defining our Beauville structures are given in Table \ref{Table1} and their types are given in Table \ref{Table2}.

We remark that this construction will not work in cases that are non-split extensions. This includes the almost simple `sporadic' Tits group $^2$F$_4(2)$. Straightforward computations verify that this group is not a strongly real Beauville group.

In light of the above we make the following tentative conjecture.

\begin{conjecture}
A split extension of a simple group is a Beauville group if, and only if, it is a strongly real Beauville group.
\end{conjecture}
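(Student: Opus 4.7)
The ``only if'' direction is immediate from the definitions: every strongly real Beauville group is a fortiori a Beauville group. For the nontrivial ``if'' direction, suppose $G = S\mathbin{:}A$ is a split extension of a simple group $S$ which happens to be a Beauville group; the plan is to promote this existence to the existence of a strongly real Beauville structure by exploiting the splitting.

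First I would fix an involution $t \in G$ coming from the complement $A$ (if $A$ has even order, which is the essential case; when $|A|$ is odd the problem collapses to $G = S$ and Conjecture \ref{simpconj} applies). The role of $t$ is to provide an inner ``inverting'' automorphism of $G$. Then, following the template used in the proof of Theorem \ref{spor}, I would attempt to build both halves of a Beauville structure in the special form $x_i := t\, t_i'^{g_i}$ and $y_i := (x_i^{j(i)})^{u(i)}$, where the $t_i'$ are further involutions of $G$ and $u(i) \in C_G(t)$. By construction, conjugation by $t$ then inverts both $x_i$ and $y_i$ simultaneously, so the strongly real condition is automatic once we exhibit the pairs. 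This reduces the conjecture to a generation-and-disjointness statement: we must choose the auxiliary data $(t_i', g_i, j(i), u(i))$ for $i=1,2$ so that $\langle x_i, y_i\rangle = G$ for each $i$, and so that condition $(\dagger)$ of Definition \ref{DagDef} holds for the resulting structure.

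The main obstacle, I expect, is verifying $(\dagger)$. As the paper already notes just after the statement of Theorem \ref{spor}, in these $tt'$-type constructions the generators tend to have even order, so the convenient coprimality shortcut $\gcd(o(x_1)o(y_1)o(x_1y_1),\,o(x_2)o(y_2)o(x_2y_2)) = 1$ is usually not available, and one must argue directly that the cyclic subgroups $\langle x_i\rangle$, $\langle y_i\rangle$, $\langle x_iy_i\rangle$ meet only trivially in $G$-conjugacy classes across the two structures. I would try to control this by combining knowledge of the maximal subgroups of $G$ (to pin down generation) with character-theoretic or class-multiplication-coefficient arguments to pin down the conjugacy behaviour of the specific powers involved. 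The non-$2$-generation issue mentioned earlier (e.g.\ $\operatorname{Aut}(\mathrm{PSL}_4(p^r))$ for $p$ odd, $r$ even) causes no trouble here, since such groups are not Beauville groups in the first place and are excluded from the hypothesis.

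Finally, because the construction depends on the fine structure of $G$, I expect that a uniform proof is unlikely and the conjecture will be settled family-by-family according to the classification of $S$: alternating, sporadic, and Lie type, mirroring the historical trajectory of Conjecture \ref{simpconj}. A sensible first target would be to extend Theorem \ref{spor} from sporadic $S$ to a single infinite family such as $\mathrm{PSL}_2(q)$, producing a strongly real Beauville structure on every split almost-simple extension $\mathrm{PSL}_2(q)\mathbin{:}A$ that is already known to be Beauville; this would simultaneously test the construction template and the conjecture itself.
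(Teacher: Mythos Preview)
The statement you are addressing is a \emph{conjecture}, not a theorem: the paper does not prove it. Immediately after stating it, the author calls it ``tentative'' and notes only that ``some (unpublished) progress on this conjecture has been made by the author's PhD student.'' There is therefore no proof in the paper to compare your proposal against.

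Your proposal is, correspondingly, not a proof but a research strategy, and you signal this yourself (``I would attempt'', ``I expect'', ``a sensible first target''). The strategy you outline---building both halves of the Beauville structure as products $t t'^{g}$ of involutions and conjugating a power by a centraliser element so that conjugation by $t$ inverts everything---is exactly the construction the paper uses to prove Theorem~\ref{spor}, which is what motivated the conjecture in the first place. So on the level of heuristics you and the paper are in full agreement; you have correctly reverse-engineered the intended line of attack.

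There is, however, one genuine error in your outline. You write that ``when $|A|$ is odd the problem collapses to $G = S$ and Conjecture~\ref{simpconj} applies.'' This is false: split extensions $S{:}A$ with $|A|$ odd and $A \neq 1$ certainly exist (for instance $\mathrm{PSL}_2(8){:}3$, or more generally any simple group of Lie type extended by a field automorphism of odd prime order), and these are not equal to $S$. Your $tt'$-template requires an involution lying outside $S$, so the odd-$|A|$ case is a real obstruction to the strategy, not a triviality that can be waved away. Any serious attack on the conjecture would need a separate mechanism for such extensions.
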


We remark that some (unpublished) progress on this conjecture has been made by the author's PhD student, Emilio Pierro, whilst the question of which of the groups $PGL_2(q)$ are Beauville is discussed by Garion in \cite{Garion}.

\begin{table}
\begin{center}
\begin{tabular}{|c|c|c|c|c|}%
\hline
$G$&$t_1$&$t_2$&$x_1$&$x_2$\\
\hline\hline
M$_{12}:2$&$c$&$(cd)^6$&$t_1t_2$&$t_1t_2^d$\\
M$_{22}:2$&$((cd)^2d)^5$&$d^2$&$t_1t_2$&$t_1t_2^c$\\
J$_2:2$&$c$&$(cd^2(cd)^2)^6$&$t_1t_2$&$t_1t_2^{d^4}$\\
HS$:2$&$c$&$((cd)^3cd^2)^5$&$t_1t_2$&$t_1t_2^d$\\
J$_3:2$&$c$&$(cd)^{12}$&$t_1t_2$&$t_1t_2^d$\\
McL$:2$&$c$&$((cd)^2(cd^2)^2(cd)^2d)^2$&$t_1t_2$&$t_1t_2^{(dcd)^2}$\\
He$:2$&$c$&$d^3$&$t_1t_2$&$t_1t_2^{cd(cd^2)^2c}$\\
Suz$:2$&$c$&$(cd)^{14}$&$t_1t_2$&$t_1t_2^{(dc)^2(d^2c)^2d^2}$\\
O'N$:2$&$c$&$d^2$&$t_1t_2$&$t_1t_2^{cd}$\\
Fi$_{22}:2$&$(cd^4)^{10}$&$(cd^3)^{15}$&$t_1t_2^{dcd^6}$&$t_1t_2^{dcd}$\\
HN$:2$&$c$&$(cd^3(cd)^2)^{12}$&$t_1t_2^{(dcd)^2d^2}$&$t_1t_2^{dcd^4cd^2}$\\
Fi$_{24}$&$d^4$&$((cd)^2d^3)^33$&$t_1t_2^{d^4c}$&$t_1t_2^{dcd^2c}$\\
\hline\hline
$G$&$u_1$&$u_2$&$j(1)$&$j(2)$\\
\hline\hline
M$_{12}:2$&$[c,(dc)^2d^2]^3$&$(dc)^2d[c,(dc)^2d]^2$&1&1\\
M$_{22}:2$&$cd^2cd[t_1,cd^2cd]^5$&$[t_1,c]^2$&5&9\\
J$_2:2$&$d[c,d]^3$&$d[c,d]^3$&1&9\\
HS$:2$&$d[c,d]$&$d[c,d]$&1&1\\
J$_3:2$&$d[c,d]^4$&$d[c,d]^4$&21&1\\
McL$:2$&$d^2[c,d^2]^7$&$dcd[c,dcd]^7$&1&7\\
He$:2$&$d[c,d]^7$&$d[c,d]^7$&15&19\\
Suz$:2$&$d[c,d]^3$&$d[c,d]^3$&9&3\\
O'N$:2$&$[c,d]^5$&$[c,d]^5$&7&1\\
Fi$_{22}:2$&$[t_1,d^3]^3$&$[t_1,d]^3$&3&1\\
HN$:2$&$d^2[cd^2]^10$&$d[c,d]^4$&1&39\\
Fi$_{24}$&$c[t_1,c]$&$c[t_1,c]$&7&25\\
\hline
\end{tabular}\caption{Words in the standard generators providing strongly real Beauville structures for each of the non-simple almost simple sporadic groups.}\label{Table1}
\end{center}
\end{table}

\begin{table}
\begin{center}
\begin{tabular}{|c|c|c|c|}
\hline
$G$&type&$G$&type\\
\hline\hline

M$_{12}:2$&((4,4,5),(6,6,3))&He : 2&((16,16,7),(30,30,5))\\
M$_{22}:2$&((12,12,4),(10,10,5))&Suz : 2&((10,10,3),(8,8,13))\\
J$_2:2$&((24,24,15),(14,14,7))&O'N : 2&((38,38,19),(56,56,28))\\
HS : 2&((8,8,8),(6,6,15))&Fi$_{22}$ : 2&((10,10,11),(12,12,4))\\
J$_3:2$&((34,34,17),(24,24,4))&HN : 2&((18,18,25),(44,44,22))\\
McL : 2&((8,8,3),(10,10,5))&Fi$_{24}$&((66,66,33),(84,84,26))\\

\hline
\end{tabular}\caption{The types of the Beauville structures specified by the words in Table \ref{Table1}.}\label{Table2}
\end{center}
\end{table}

\section{Nilpotent Groups}

It is immediate that the direct product of two Beauville groups of
coprime order is again a Beauville group (though slightly more is
true --- see \cite[Lemma 1.3]{BarkerBostonFairbairn}). Recall that a
finite group is nilpotent if and only if it is the direct product of
its Sylow subgroups. Since Sylow subgroups for different primes will
have coprime orders this observation reduces the study of
nilpotent Beauville groups to that of Beauville $p$-groups.

There is another motivation for wanting to study Beauville
$p$-groups and that is to study how finite groups in general behave
from the point of view of Beauville constructions. We saw in Section
2 that among the non-abelian finite simple groups only one fails to
be a Beauville group and of the rest only two fail to be strongly
real. This immediately raises the following question.
\begin{question}
Are most Beauville groups strongly real Beauville groups?
\end{question}
Finite simple groups are rare gems in the rough --- for every positive
integer $n$ there are at most two finite simple groups of order $n$
and for most values of $n$ there are none at all. Taking our lead
from their behaviour is therefore somewhat dangerous.

Few mathematicians outside finite group theory seem to realise that
in some sense most finite groups are $p$-groups, indeed most finite
groups are 2-groups. There are $49\,910\,529\,484$ groups of order
at most $2\,000$. Of these $49\,487\,365\,422$ have order precisely
$1\,024$ - that's more than $99\cdot1\%$ of the total! When we throw
in the other 2-groups of order at most $1\,024$ and the other
$p$-groups of order at most $2\,000$ we have almost all of them.
Determining which of the Beauville $p$-groups are strongly real
Beauville $p$-groups thus goes a long way to answering the above
question for groups in general. (For details of these extraordinary
computational feats and a historical discussion of the problem of
enumerating groups of small order, which has been worked on for
almost a century and a half, see the work of Besche, Eick and
O'Brian in \cite{obrian1,obrian2}.)

Theorem \ref{ab} and Corollary \ref{abCor} tell us that if $p\geq5$
is prime then there are infinitely many strongly real Beauville
$p$-groups - just let $n$ be any power of $p$. These results are,
however, useless for the primes 2 and 3. As far as the author is
aware there are no known examples.

\begin{problem}
Find strongly real Beauville $2$-groups and $3$-groups.
\end{problem}

The only known infinite family of
Beauville 2-groups are those recently constructed by Barker, Boston,
Peyerimhoff and Vdovina in \cite{Barkeretal13}. One of the main
results of \cite{Barkeretal13} is that the groups constructed there
are not strongly real. Furthermore there remain only
finitely many known examples of Beauville 3-groups.

In general, $p$-groups have large outer automorphism groups
\cite{Boston,BostonBushHajir}, so it seems likely that most
Beauville $p$-groups are in fact strongly real. Again, as far as the
author is aware, this matter remains largely uninvestigated.
\begin{problem}
Find non-abelian strongly real Beauville $p$-groups.
\end{problem}

The best general discussion of work on Beauville $p$-groups is
Boston's contribution to these proceedings \cite{BostonSurv}. The work of the Barker,
Boston and the author in \cite{BarkerBostonFairbairn} and the work
of Barker, Boston, Peyerimhoff and Vdovina in
\cite{Barkeretal13,Barkeretal11,BarkeretalAgain} are also worth
consulting.\\

\textbf{Acknowledgements} The author wishes to express his deepest gratitude to the organisers of the Beauville Surfaces and Groups 2012 conference held in the University of Newcastle without which this volume, and thus the opportunity to present these results here would not have been possible. The author also wishes to thank Professor Gareth Jones for many invaluable comments on earlier drafts of this article, particularly regarding the results concerned with products of symmetric and alternating groups. Finally, the author wishes to thank the anonymous referee whose comments and suggestions have substantially improved the readability of this paper, particularly bearing in mind the wide breadth of the audience for this work.

\end{document}